\documentclass[10pt]{article}

\RequirePackage{amsmath}
\RequirePackage{amsfonts}
\RequirePackage{amssymb}
\RequirePackage{hyperref}
\RequirePackage{multirow}
\RequirePackage{algorithm}
\RequirePackage{graphicx}
\RequirePackage{xcolor}
\usepackage{booktabs}
\allowdisplaybreaks

\usepackage{amsmath}
\usepackage{amssymb}
\usepackage{amsthm}
\usepackage{textcomp}
\usepackage{graphicx}
\usepackage{algorithmic}
\usepackage{dpr_fec}
\usepackage{capt-of}
\usepackage{dsfont}
\usepackage{threeparttable}

\usepackage{dpr_fec,ifthen,dsfont}
\hypersetup{colorlinks=true}

\newtheorem{theorem}{Theorem}[section]

\newtheorem{lemma}{Lemma}[section]

\usepackage{isetechreport}
\def\reportyear{26T}
\def\reportno{008}
\def\revisionno{0}
\def\originaldate{\today}
\def\revisiondate{\today}
\coraltrue
\cvcrfalse

\begin{document}

\title{Low-Order Explicit Hessian Imitation Method for Large-Scale Supervised Machine Learning}
\author{Yunlang Zhu\thanks{E-mail: yuza23@lehigh.edu}}
\author{Lingjun Guo\thanks{E-mail: lig423@lehigh.edu}}
\author{Zahra Khatti\thanks{E-mail: zak223@lehigh.edu}}
\author{Xiaoyi Qu\thanks{E-mail: xiq322@lehigh.edu}}
\author{Chia-Yuan Wu\thanks{E-mail: chw222@lehigh.edu}}
\author{Lara Zebiane\thanks{E-mail: lara.zebiane@lehigh.edu}}
\author{Frank E.~Curtis\thanks{E-mail: frank.e.curtis@lehigh.edu}}
\affil{Department of Industrial and Systems Engineering, Lehigh University}
\titlepage

\maketitle

\begin{abstract}
  An algorithm is proposed for solving optimization problems arising in neural network training for supervised learning. The unique feature of the algorithm is the use of an auxiliary loss, in addition to the original loss employed for model training. The purpose of the auxiliary loss is to provide a mechanism for creating a low-order Hessian-type approximation for the original loss. The proposed algorithm employs the resulting low-order second-derivative approximation terms in place of the second-order momentum terms (i.e., squared elements of the gradient of the loss function) in an overall scheme that has computational cost on par with an Adam-type approach. Whereas the squared elements of a gradient vector do not necessarily approximate second-order derivatives well, by careful construction of the auxiliary loss, second-order derivative-type approximations for the original loss can be computed and employed by the algorithm in an efficient manner. A convergence guarantee is provided for the proposed algorithm that is on par with guarantees available for similar stochastic diagonal-scaling methods. The results of numerical experiments show situations when the proposed algorithm outperforms Adam and other popular modern optimizers. 
\end{abstract}


\section{Introduction}\label{sec.introduction}

Stochastic diagonal-scaling methods \cite{BottCurtNoce2018}, such as Adam \cite{KingBa2014} and its variants, are leading methods for training neural networks for supervised learning tasks. The computations involved in these methods are motivated in various ways, from discrete-step to continuous-time gradient flow perspectives.  Common features of the most successful methods include averaging and/or momentum. Such strategies seem particularly effective in the context of stochastic-gradient-based methods where, for example, averaging has the effect of mitigating the potential poor effects of derivative estimates that have large errors. This is opposed to the literature on deterministic optimization, which is dominated by (quasi-)Newton strategies whose strengths are primarily derived by their use of (approximate) second-order derivatives. The benefits of second-order derivatives are clear once the optimization process is close to a local minimizer to which a fast rate of local convergence can be achieved. Second-order derivatives also often help to create better steps when far from a local minimizer.

Despite this difference in the literature on the most successful optimization methods for deterministic and stochastic optimization, it remains an important question of interest whether one can design a strategy for supervised learning that leverages both classes of approaches, namely, averaging and/or momentum strategies to mitigate the effects of derivative estimates with large errors and the benefits of second-order derivatives (or approximations of them).

\subsection{Contributions}

We propose a new stochastic diagonal-scaling method, nicknamed LEHI, that has per-iteration computational costs that are on par with the very popular Adam scheme. The unique feature of LEHI is the use of an auxiliary loss function, the purpose of which is to provide a mechanism for creating a low-order Hessian-type approximation of the original loss.  The resulting algorithm leverages averaging and momentum (namely, running averages of first- and second-order terms) and attempts to approximate second derivatives more explicitly than is done in an Adam-type scheme.  We motivate and state LEHI, provide a convergence rate guarantee for it, and provide the results of a significant set of numerical experiments to demonstrate situations in which the proposed method outperforms Adam and other schemes.  We highlight upfront that the convergence-rate guarantee for our proposed algorithm is not stronger than that of Adam; the point is that the algorithm has a comparable guarantee and can perform better in practice in various settings. We provide a review of the relevant literature in~\S\ref{sec.literature}.  Our proposed algorithm and a convergence-rate guarantee for it are presented in~\S\ref{sec.algorithm}.  The results of numerical experiments are provided in~\S\ref{sec.numerical} and concluding remarks are offered in~\S\ref{sec.conclusion}.

\section{Literature Review}\label{sec.literature}

Stochastic-gradient methods have their roots in the method of stochastic approximation proposed by \cite{RobbMonr51}; see also \cite{RobbSieg71} for further theoretical guarantees. Significant advances in the design of stochastic diagonal-scaling methods that build off of the stochastic gradient methodology came in the past two decades with methods such as Adagrad~\cite{DuchHazaSing2011} and RMSprop~\cite{TielHint12}. The primary motivation for these methods was to mitigate the effect of (stochastic) gradient components of large magnitude.  For training (deep) neural networks, these methods were relatively quickly superseded by Adam~\cite{KingBa2014}, which employs both running averages of gradient components and squared gradient components.  In recent years, numerous variants of Adam have been proposed, including NAdam \cite{Dozat2016}, AdamW \cite{loshchilov2017decoupled}, AMSGrad \cite{reddi2019convergence}, and RAdam \cite{liu2021varianceadaptivelearningrate}.  Our proposed algorithm, LEHI, can be viewed as another alternative of Adam designed with a unique motivation in mind, as described in the next section.

\section{Algorithm Development}\label{sec.algorithm}

Let us begin with a statement of a simplified version of the Adam algorithm, which we provide as Algorithm~\ref{alg.adam_paper}.  The version of the algorithm that we state involves a modified bias correction term. This version has been shown to offer comparable practical performance with the original Adam algorithm, and allows one to provide a simpler analysis of a convergence guarantee; see \cite{DefoBottBachUsun2022}.  We base our proposed algorithm on this simplified version, but it is straightforward to derive a variant of our algorithm that employs the same bias correction as the original Adam scheme.  Note that, for any positive integer $j$, we denote the set of integers up to and including $j$ as $\{1,\dots,j\} =: [j]$.

Given a smooth empirical loss function $f : \mathbb{R}^d \to \mathbb{R}$ defined with respect to trainable parameters stacked into a vector $w \in \mathbb{R}^d$, the algorithm is designed to solve
\begin{equation}\label{prob.f_simple}
  \min_{w \in \mathbb{R}^d}\ f(w).
\end{equation}
The sequence $\{m_k\}$ maintains a running weighted average of stochastic gradients, whereas the sequence $\{v_k\}$ maintains a running average of squared stochastic gradient elements, which in turn are used to scale the step.  The sequence $\{\alpha_k\}$ incorporates a learning rate $\alpha$ along with a bias correction in terms of the parameter pair $(\beta_1,\beta_2)$.  Notice that the formula that defines $\{\alpha_k\}$ ensures that this sequence is monotonically increasing, which turns out to be useful for the analysis of the method. The parameter $\epsilon$ is used to ensure that the step components are not scaled by excessively large values.

\begin{algorithm}[ht]
  \caption{Adam (simplified)}
  \label{alg.adam_paper}
  \begin{algorithmic}[1]
    \REQUIRE $w_1 \in \mathbb{R}^{d}$, $\beta_1 \in [0,1)$, $\beta_2 \in (\beta_1,1)$, $\alpha \in (0,\infty)$, and $\epsilon \in (0,\infty)$
    \STATE set $m_0 \gets 0 \in \mathbb{R}^{d}$
    \STATE set $v_0 \gets 0 \in \mathbb{R}^{d}$
    \FOR{all $k\in\mathbb{N}$}
      \STATE set stochastic gradient estimate $g_k \approx \nabla f(w_k)$
      \STATE set $m_{k,i} \gets \beta_1 m_{k-1,i} + g_{k,i}$ for all $i \in [d]$
      \STATE set $v_{k,i} \gets \beta_2 v_{k-1,i} + g_{k,i}^2$ for all $i \in [d]$
      \STATE set $\alpha_k \gets \alpha \frac{(1-\beta_1)\sqrt{1-\beta_2^k}}{\sqrt{1-\beta_2}}$
      \STATE set $w_{k+1,i} \gets w_{k,i} - \alpha_k \frac{m_{k,i}}{\sqrt{\epsilon + v_{k,i}}}$ for all $i \in [d]$
    \ENDFOR
  \end{algorithmic}
\end{algorithm}

A potential shortcoming of typical Adam-type schemes, such as that presented in Algorithm~\ref{alg.adam_paper}, is that it does not leverage knowledge of the composite structure of the overall loss that is typical of (deep) neural network training.  Let us now investigate such a structure in order to motivate LEHI.

\subsection{Composite Objective Function}\label{sec.composite}

Suppose that $f : \mathbb{R}^{d} \times \mathbb{R}^{d_x} \times \mathbb{R}^{d_y} \to \mathbb{R}^{}$ is, in fact, defined as a composite of a loss $\ell : \mathbb{R}^{q} \to \mathbb{R}^{}$ and a prediction function $p : \mathbb{R}^{d} \times \mathbb{R}^{d_x} \times \mathbb{R}^{d_y} \to \mathbb{R}^{q}$, i.e., for all $w \in \mathbb{R}^{d}$, $x \in \mathbb{R}^{d_x}$, and $y \in \mathbb{R}^{d_y}$ one has
\begin{equation*}
  f(w,x,y) = (\ell \circ p)(w,x,y) = \ell(p(w,x),y).
\end{equation*}
Here, one can think of $w$ as the trainable parameters of a (deep) neural network and $(x,y)$ as an input/output pair for training the network. For simplicity, let us expand upon this formulation with a single pair $(x,y)$.  In supervised learning problems more generally, $f$ is a large sum of such terms defined over a set of input/output pairs $\{(x_j,y_j)\}_{j=1}^N$.  All of our subsequent discussion is easily generalized to this setting, but for our purposes here we simplify it to a single pair $(x,y)$.

Assuming that both $\ell$ and~$p$ are sufficiently differentiable such that the composite function $f$ is twice continuously differentiable (meaning that its Hessian is symmetric for all $w \in \mathbb{R}^d$), the gradient function $\nabla_w f : \mathbb{R}^{d} \to \mathbb{R}^{d}$ and Hessian function $\nabla_{ww}^2 f : \mathbb{R}^{d} \to \mathbb{R}^{d \times d}$ are given by
\begin{align}
  \nabla_w f(w,x,y)
  =&\ \underbrace{\nabla_w p(w,x)}_{d \times q} \underbrace{\nabla_p \ell(p(w,x),y)}_{q \times 1} \label{eq.dw} \\
  \text{and}\quad \nabla_{ww}^2 f(w,x,y) 
  =&\ \underbrace{\nabla_w p(w,x)}_{d \times q} \underbrace{\nabla_{pp}^2 \ell(p(w,x),y)}_{q \times q}  \underbrace{\nabla_w p(w,x)^T}_{q \times d} \nonumber \\
  & \quad + \sum_{i \in [q]} \underbrace{\nabla_{ww}^2 p_i(w,x)}_{d \times d} \underbrace{\nabla_{p_i} \ell(p(w,x),y)}_{1 \times 1}. \label{eq.ddw}
\end{align}
Here, $\nabla_{p_i} \ell$ is the $i$th component of $\nabla_p \ell$.  Derivative-based algorithms for minimizing $f$ with respect to~$w$ use these quantities or approximations of them.  In particular, at a high level:
\begin{itemize}
  \item (stochastic) gradient methods use \eqref{eq.dw} only.
  \item (stochastic) Newton methods use \eqref{eq.dw} and \eqref{eq.ddw}.
  \item (stochastic) generalized Gauss-Newton uses \eqref{eq.dw} and approximates \eqref{eq.ddw} with only its first term.  This provides a reasonable approximation when, for each $i \in [q]$, $\nabla_{ww}^2 p_i(w,x) \approx 0$ and/or $\nabla_{p_i} \ell(p(w,x),y) \approx 0$, so the second term is small.  A typical justification for assuming that this is indeed the case in the context of, e.g., least-squares problems is that $\ell = \frac{1}{2}\|\cdot\|_2^2$ yields $\nabla_{p_i} \ell = p_i(w,x) \approx 0$ when one is near a point with $p = 0$.  Otherwise, it may still be a reasonable assumption in the context of neural network training when one assumes that, near a solution, the function $p$ is \emph{linear-like} so that $\nabla_{ww}^2 p_i(w,x) \approx 0$ for all $i \in [q]$.
  \item (stochastic) diagonal-scaling methods like Adam form scaling matrices by averaging the diagonal elements of outer-product terms of the form
  \begin{align*}
    \nabla_w f(w,x,y) \nabla_w f(w,x,y)^T
    = \nabla_w p(w,x) H_{p,\ell}(w,x,y) \nabla_w p(w,x)^T,
  \end{align*}
  where the inner term is defined as
  \begin{equation*}
    H_{p,\ell}(w,x,y) \equiv \nabla_p \ell(p(w,x),y) \nabla_p \ell(p(w,x),y)^T.
  \end{equation*}
  Notice that this outer-product term only approximates the first term in~\eqref{eq.ddw} when
  \begin{equation}\label{eq.good_approx}
    H_{p,\ell}(w,x,y) \approx \nabla_{pp}^2 \ell(p(w,x),y).
  \end{equation}
  An assumption that these matrices are close to each other would rely on behavior of the loss $\ell$, not the prediction function $p$.  Assuming that \eqref{eq.good_approx} holds is not reasonable in general.
\end{itemize}

\subsection{Canonical Problem Type}\label{sec.canonical}

Going forward, let us suppose that the Hessian~\eqref{eq.ddw} has
\begin{align}
  \nabla_{ww}^2 f(w,x,y)
  \approx&\ \nabla_w p(w,x) \nabla_{pp}^2 \ell(p(w,x),y) \nabla_w p(w,x)^T, \label{eq.ass} \\
  \text{i.e.},\ \ 0 \approx&\ \sum_{i \in [q]} \nabla_{ww}^2 p_i(w,x) \nabla_{p_i} \ell(p(w,x),y). \nonumber
\end{align}
As we have seen in the context of (stochastic) generalized Gauss-Newton, the approximation in~\eqref{eq.ass} could be justified by assuming that $\nabla_{p_i} \ell(p(w,x),y) \approx 0$ for all $i \in [q]$, at least once a solution is approached.  However, the idea that $\nabla_p \ell(p(w,x),y) \approx 0$ might inherently contradict \eqref{eq.good_approx}, since in this case $\nabla_p \ell(p(w,x),y) \approx 0$ would suggest $\nabla_{pp}^2 \ell(p(w,x),y) \approx 0$, which is not something that should be assumed.  (For example, for a least-squares loss, this Hessian is the identity.)  Let us proceed instead with the idea that \eqref{eq.ass} can be justified by assuming that $\nabla_{ww}^2 p_i(w,x) \approx 0$ for all $i \in [q]$.

Now, the main idea of LEHI is that the middle term in~\eqref{eq.ass} can be approximated in a different manner than~\eqref{eq.good_approx}, since in general there is no reason to believe that the Hessian is well approximated by an outer product of the gradient vector.  Let us consider instead a Hessian approximation of the form
\begin{equation*}
  \nabla_w p(w,x) v(p(w,x),y) v(p(w,x),y)^T \nabla_w p(w,x)^T,
\end{equation*}
where $v(p(w,x),y) \in \mathbb{R}^q$. This approximation, with a rank-one outer product inside, leads to a method with cost comparable to Adam, say, when the second-order momentum terms are replaced by
\begin{equation}\label{eq.back}
  [\nabla_w p(w,x) v(p(w,x),y)]_i^2\ \ \text{for all}\ \ i \in [d].
\end{equation}
The main question is how to choose $v(p(w,x),y)$. Rather than $v(p(w,x),y) = \nabla_p \ell(p(w,x),y)$, as in Adam, one can instead choose $v(p(w,x),y)$ so as to ensure a better approximation of the form
\begin{equation}\label{eq.v}
  v(p(w,x),y) v(p(w,x),y)^T \approx \nabla_{pp}^2 \ell(p(w,x),y),
\end{equation}
which requires only an understanding of the explicit form of second-order derivatives of the loss $\ell$ with respect to~$p$.  For our proposed LEHI scheme, we propose to choose $v(p(w,x),y)$ as the gradient of an auxiliary loss function~$z$, where $z$ is chosen specifically with the aim to have~\eqref{eq.v}.  In this manner, \eqref{eq.back} can be computed using standard backpropagation.  We provide a couple of illustrative examples of how $z$ may be chosen for certain $\ell$ in Appendix~\ref{app.auxiliary}.  The following theorem, proved in Appendix~\ref{app.auxiliary}, shows an appropriate choice of an auxiliary loss for binary cross-entropy.

\begin{theorem}\label{th.exam}
  Consider the binary cross-entropy loss $\ell(p,y) = -y \log(\sigma(p)) - (1 - y) \log(1 - \sigma(p))$, where $\sigma(p) = (1 + e^{-p})^{-1}$.  Then, with the auxiliary loss $z(p,y) = \arcsin(\tanh(p/2))$ one finds that with $v(p,y) := \nabla_p z(p,y)$ one obtains $(v(p,y))^2 = \sigma(p)(1 - \sigma(p)) = \nabla_{pp}^2 \ell(p,y)$.
\end{theorem}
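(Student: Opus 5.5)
The claim is a two-part identity, and I would verify each part by direct differentiation in the scalar case $q=1$.

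\emph{Second derivative of $\ell$.} First I will use the standard identities $\sigma'(p)=\sigma(p)(1-\sigma(p))$ and $1-\sigma(p)=\sigma(-p)$. These give $\frac{d}{dp}\log\sigma(p) = 1-\sigma(p)$ and $\frac{d}{dp}\log(1-\sigma(p)) = -\sigma(p)$. Hence
\begin{equation*}
  \nabla_p \ell(p,y) = -y(1-\sigma(p)) + (1-y)\sigma(p) = \sigma(p)-y .
\end{equation*}
Differentiating once more gives $\nabla_{pp}^2\ell(p,y)=\sigma(p)(1-\sigma(p))$, which is independent of $y$. This establishes the second equality in the statement of Theorem~\ref{th.exam}.

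\emph{Derivative of the auxiliary loss.} Next I will compute $v(p,y)=\nabla_p z(p,y)$ by the chain rule. Set $t=\tanh(p/2)$, so that $dt/dp = \tfrac12(1-t^2)$. Since $|t|<1$ for all real $p$, $\arcsin$ is differentiable at $t$ with derivative $1/\sqrt{1-t^2}$. Therefore
\begin{equation*}
  v(p,y) = \frac{\tfrac12(1-t^2)}{\sqrt{1-t^2}} = \tfrac12\sqrt{1-t^2} = \tfrac12\,\mathrm{sech}(p/2),
\end{equation*}
and so $v(p,y)^2 = \tfrac14\,\mathrm{sech}^2(p/2)$.

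\emph{Matching the two expressions.} It remains to show $\sigma(p)(1-\sigma(p)) = \tfrac14\mathrm{sech}^2(p/2)$. I would write
\begin{equation*}
  \sigma(p)(1-\sigma(p)) = \frac{e^{-p}}{(1+e^{-p})^2} = \frac{1}{(e^{p/2}+e^{-p/2})^2} = \frac{1}{4\cosh^2(p/2)},
\end{equation*}
where the middle step multiplies numerator and denominator by $e^{p}$. Alternatively, one can use $\sigma(p)=\tfrac12(1+\tanh(p/2))$, which gives $\sigma(1-\sigma)=\tfrac14(1-\tanh^2(p/2))$ and matches $v^2=\tfrac14(1-t^2)$ directly.

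This last identification is the only step that needs any care. I would try the $\tanh$ form of $\sigma$ first, since it produces the same expression $\tfrac14(1-t^2)$ on both sides. The other point to state explicitly is that $|\tanh(p/2)|<1$, which justifies differentiating $\arcsin$. After these two steps the proof is a short calculus verification.
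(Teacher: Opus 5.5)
Your proposal is correct and follows the paper's proof. The paper also applies the chain rule through $\tanh(p/2)$ to get $v=\tfrac12\sqrt{1-\tanh^2(p/2)}$, and then matches this to $\sigma(p)(1-\sigma(p))$ by rewriting in exponentials as $1/(e^{p/2}+e^{-p/2})^2$, which is your first matching route; your explicit derivation of $\nabla_{pp}^2\ell$, the justification via $|\tanh(p/2)|<1$, and the identity $\sigma(p)=\tfrac12(1+\tanh(p/2))$ are small refinements the paper leaves implicit.
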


We emphasize that an appropriate auxiliary loss can be derived in a similar manner for any differentiable loss function. Theorem~\ref{th.exam} and those in Appendix~\ref{app.auxiliary} only offer some illustrative examples.

\subsection{Algorithm and Theorem}

Our proposed algorithm is stated as Algorithm~\ref{alg.lehi} below.  Due to its design, we refer to the algorithm as the Low-order Explicit Hessian Imitation (LEHI) Method.  As motivated in the preceding sections, one presumes that $f \equiv \ell \circ p$ and $\tilde{f} \equiv z \circ p$, and that $f$ is an empirical loss function over a dataset $\{(x_j,y_j)\}_{j=1}^N$ such that the gradients have the form
\begin{equation*}
  \begin{aligned}
    \nabla f(w_k) &= \frac1N \sum_{j=1}^N \nabla_w p(w_k,x_j) \nabla_p \ell(p(w_k,x_j),y_j) \\ \text{and}\ \ 
    \nabla \tilde{f}(w_k) &= \frac1N \sum_{j=1}^N \nabla_w p(w_k,x_j) \nabla_p z(p(w_k,x_j),y_j)
  \end{aligned}
\end{equation*}
Notice that this pair can be computed efficiently together with backprop due to their shared $\nabla_w p$ terms. The loss $z$ is chosen in such a manner that, along the diagonal elements, one has (recall \eqref{eq.v})
\begin{equation*}
  \begin{aligned}
    \nabla_p z(p(w,x),y) \nabla_p z(p(w,x),y)^T
    =:&\ v(p(w,x),y) v(p(w,x),y)^T \\
    \approx&\ \nabla_{pp}^2 \ell(p(w,x),y)\ \text{for all}\ \ (w,x,y) \in \mathbb{R}^d \times \mathbb{R}^{d_x} \times \mathbb{R}^{d_y}.
  \end{aligned}
\end{equation*}

\begin{algorithm}[ht]
  \caption{LEHI}
  \label{alg.lehi}
  \begin{algorithmic}[1]
    \REQUIRE $w_1 \in \mathbb{R}^{d}$, $\beta_1 \in [0,1)$, $\beta_2 \in (\beta_1,1)$, $\alpha \in (0,\infty)$, and $\epsilon \in (0,\infty)$
    \STATE set $m_0 \gets 0 \in \mathbb{R}^{d}$
    \STATE set $v_0 \gets 0 \in \mathbb{R}^{d}$
    \FOR{all $k\in\mathbb{N}$}
      \STATE set stochastic gradient estimate $g_k \approx \nabla f(w_k)$
      \STATE set stochastic gradient estimate $\tilde{g}_k \approx \nabla \tilde{f}(w_k)$
      \STATE set $m_{k,i} \gets \beta_1 m_{k-1,i} + g_{k,i}$ for all $i \in [d]$
      \STATE set $\vtilde_{k,i} \gets \beta_2 \vtilde_{k-1,i} + \tilde{g}_{k,i}^2$ for all $i \in [d]$
      \STATE set $\alpha_k \gets \alpha \frac{(1 - \beta_1) \sqrt{1 - \beta_2^k}}{\sqrt{1 - \beta_2}}$
      \STATE set $w_{k+1,i} \gets w_{k,i} - \alpha_k \frac{m_{k,i}}{\sqrt{\epsilon + \vtilde_{k,i}}}$ for all $i \in [d]$
    \ENDFOR
  \end{algorithmic}
\end{algorithm}

A convergence-rate guarantee for LEHI can be proved that is comparable to one proved for Adam in \cite{DefoBottBachUsun2022}. We prove this as Theorem~\ref{th.lehi} below, the proof of which is provided in Appendix~\ref{app.th.lehi}. The stochastic nature of the algorithm means that our analysis of it considers a stochastic process defined by the algorithm.  Let $(\Omega,{\cal F},\mathbb{P})$ denote a probability space that captures the behavior of the algorithm, i.e., each outcome in $\Omega$ represents a realization of a run of the algorithm.  Let $\mathbb{E}$ denote the expected value operator defined by the probability measure $\mathbb{P}$.  The only source of randomness in each iteration is the computation of the stochastic gradient estimates.  Let ${\cal F}_1$ be the $\sigma$-algebra defined by the initial conditions of an algorithm, and, more generally, for all $k \in \mathbb{N}$ let ${\cal F}_k$ be the $\sigma$-algebra generated by the initial conditions and the stochastic gradient estimators up through the end of iteration $k-1$.  In this manner, one has that ${\cal F}_1 \subseteq {\cal F}_2 \subseteq \cdots \subseteq {\cal F}$ and the sequence $\{{\cal F}_k\}$ is a filtration.

\begin{theorem}\label{th.lehi}
  Suppose that there exists an open convex set ${\cal W} \subseteq \mathbb{R}^d$ containing the iterates of any run of Algorithm~\ref{alg.lehi} over which $f$ is continuously differentiable and bounded below by $f_{\inf} \in \mathbb{R}$, and over which the gradient function $\nabla f : \mathbb{R}^d \to \mathbb{R}^d$ is Lipschitz continuous with constant $L \in (0,\infty)$.  In addition, suppose that for all $k \in \mathbb{N}$ one has
  \begin{equation}
    \mathbb{E}[g_k | {\cal F}_k] = \nabla f(w_k)
  \end{equation}
  and suppose that there exists a constant $M \in (0,\infty)$ such that for all $k \in \mathbb{N}$ one has
  \begin{equation}
    \max\{\|g_k\|_\infty,\|\tilde{g}_k\|_\infty\} \leq \sqrt{M^2 - \epsilon}.
  \end{equation}
  Finally, suppose that Algorithm~\ref{alg.lehi} is run for $K \in \mathbb{N}$ iterations, where $K > \frac{1}{1 - \beta_1}$, and let $R_K$ be a random variable taking each value in $\{1, \dots, K\}$ with probability equal to
  \begin{equation*}
    \mathbb{P}[R_K = k] = \frac{1-\beta_1^{K-k+1}}{\sum_{j=1}^K (1 - \beta_1^{K - j + 1})}.
  \end{equation*}
  Then, one has with $\tilde{K} = K - \frac{\beta_1}{1 - \beta_1}$ that
  \begin{equation*}
    \begin{aligned}
      \mathbb{E} [ \|\nabla f(w_{R_K}) \|_2^2 ]
      \leq&\ \frac{2M}{\alpha \tilde{K}} (f(w_1) - f_{\inf}) + E \left( \frac{1}{\tilde{K}} \log \left(1 + \frac{M^2}{(1-\beta_2)\epsilon} \right) - \frac{K}{\tilde{K}} \log(\beta_2) \right),
    \end{aligned}
  \end{equation*}
  where
  \begin{equation*}
    \begin{aligned}
      E :=&\ \frac{\alpha d M L (1 - \beta_1)}{(1 - \frac{\beta_1}{\beta_2})(1 - \beta_2)} + \frac{12 d M^2 \sqrt{1 - \beta_1}}{(1 - \frac{\beta_1}{\beta_2})^{3/2} \sqrt{1-\beta_2}} + \frac{2 \alpha^2 d L^2 \beta_1}{(1 - \frac{\beta_1}{\beta_2}) (1 - \beta_2)^{3/2}}.
    \end{aligned}
  \end{equation*}
\end{theorem}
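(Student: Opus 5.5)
The proof will follow the analysis of Adam with momentum in \cite{DefoBottBachUsun2022} (their Theorem 2). The only structural change is that the denominator $\vtilde_k$ is built from $\tilde g_k$ rather than $g_k$. The key observation is that the Défossez et al.\ argument never uses that the second-moment accumulator is formed from the same vector as the numerator. It uses only three facts about the accumulator:
\begin{itemize}
  \item[(i)] $\vtilde_{k,i}$ is $\mathcal{F}_k$-measurable up to the current sample, and is nondecreasing in the decayed sense $\vtilde_{k,i} \ge \beta_2 \vtilde_{k-1,i}$;
  \item[(ii)] it is bounded: $\epsilon + \vtilde_{k,i} \le \epsilon + (M^2-\epsilon)/(1-\beta_2)$;
  \item[(iii)] a telescoping log-sum bound holds, of the form $\sum_{k} \tilde g_{k,i}^2/(\epsilon+\vtilde_{k,i}) \le \log(1+\vtilde_{K,i}/\epsilon) - K\log\beta_2$.
\end{itemize}
The difficulty is that their descent lemma also uses $g_{k,i}^2/(\epsilon+v_{k,i})$ terms, and bounds them by the same telescoping. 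So the plan is to reorganize the argument so that every squared-gradient-over-denominator term appears either with $\tilde g$ in the numerator, or is replaced using the bound $\|g_k\|_\infty^2 \le M^2-\epsilon$.

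First, I write $u_{k,i} = m_{k,i}/\sqrt{\epsilon+\vtilde_{k,i}}$ and use $L$-smoothness:
\[
f(w_{k+1}) \le f(w_k) - \alpha_k \nabla f(w_k)^T u_k + \tfrac{\alpha_k^2 L}{2}\|u_k\|_2^2 .
\]
Expanding $m_k = \sum_{j\le k}\beta_1^{k-j} g_j$, the inner-product term splits into two pieces. The first is a main term $\sum_j \beta_1^{k-j}\nabla f(w_k)^T g_j/\sqrt{\epsilon+\vtilde_{k}}$. The second is an error term from replacing $\nabla f(w_k)$ with $\nabla f(w_j)$, which is controlled by Lipschitz continuity and $\|w_k-w_j\|\le \alpha_k\sum \|u\|$; this error is what produces the $\alpha^2 L^2\beta_1$ contribution to $E$.

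For the main term I use the decorrelation trick of Défossez et al. Replace the denominator $\vtilde_{k,i}$ by the $\mathcal F_{j}$-measurable surrogate $\hat v_{j,k,i} = \beta_2^{k-j+1}\vtilde_{j-1,i} + \mathbb{E}_{j-1}[\ldots]$. Then conditional unbiasedness $\mathbb{E}[g_j\mid\mathcal F_j]=\nabla f(w_j)$ yields a positive term $\tfrac12\|\nabla f(w_j)\|^2/\sqrt{\epsilon+\hat v}$, minus a correction. The surrogate construction is the step where the mismatch between $g$ and $\tilde g$ matters. The correction in their proof is bounded via
\[
\left|\frac{1}{\sqrt{\epsilon+\vtilde}}-\frac{1}{\sqrt{\epsilon+\hat v}}\right| \le \frac{\tilde g_{j,i}^2 \cdot(\text{stuff})}{\ldots},
\]
because the denominators differ only in the $\tilde g$ contributions. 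After Young's inequality this gives terms of the form $M\,\mathbb{E}[\tilde g_{j,i}^2/(\epsilon+\vtilde_{j,i})]$, which telescope via (iii). The accompanying $g$-factors are bounded by $M$ using the $\infty$-norm assumption. This is exactly the origin of the $12 d M^2\sqrt{1-\beta_1}/((1-\beta_1/\beta_2)^{3/2}\sqrt{1-\beta_2})$ term. The $\|u_k\|^2$ term needs $m_{k,i}^2/(\epsilon+\vtilde_{k,i})$. By Cauchy–Schwarz, $m_{k,i}^2 \le \frac{1}{1-\beta_1}\sum_j\beta_1^{k-j} g_{j,i}^2$, and the denominator satisfies $\vtilde_{k,i}\ge\beta_2^{k-j}\vtilde_{j,i}$, giving the factor $(1-\beta_1/\beta_2)^{-1}$. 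I expect this to be the main obstacle: the numerators involve $g_{j,i}^2$, not $\tilde g_{j,i}^2$, so no telescoping is available directly. I would try first to bound $g_{j,i}^2 \le M^2$ and $\epsilon+\vtilde_{j,i}\ge$ its value. If that is too crude to reproduce the log term, I would instead use $\frac{g_{j,i}^2}{\epsilon+\vtilde_{j,i}} \le M\cdot\frac{\|g_j\|_\infty}{\sqrt{\epsilon+\vtilde_{j,i}}}\cdot\frac{1}{\sqrt{\epsilon+\vtilde_{j,i}}}$. This is consistent with the extra factor $M$ appearing in the first term of $E$.

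Finally, the lower bound on the positive term uses $\sqrt{\epsilon+\hat v}\le M/\sqrt{1-\beta_2}$, which holds by (ii). Combining this with $\alpha_k\sqrt{1-\beta_2}/\sqrt{1-\beta_2^k}=\alpha(1-\beta_1)$ gives a per-iteration coefficient $\alpha(1-\beta_1)/(2M)$ in front of $\|\nabla f(w_j)\|^2$. I then sum over $k\le K$ and swap sums, so that each $j$ receives weight $\sum_{k\ge j}\beta_1^{k-j}(1-\beta_1) = 1-\beta_1^{K-j+1}$; this matches the law of $R_K$. The normalizer is $\sum_j(1-\beta_1^{K-j+1})\ge K-\beta_1/(1-\beta_1)=\tilde K$. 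Telescoping $f(w_1)-f(w_{K+1})\le f(w_1)-f_{\inf}$, applying (iii) together with $\vtilde_{K,i}\le M^2/(1-\beta_2)$, and dividing through gives the stated inequality with $E$ collecting the three error sources.
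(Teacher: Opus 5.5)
You follow the same skeleton as the paper (smoothness inequality, decorrelated surrogate $\hat{v}$, weights $1-\beta_1^{K-j+1}$, normalizer $\tilde K$), and you correctly locate the crux. But you do not get past it. Every log-sum bound in this argument comes from Lemma~\ref{lem.adam.u} (Lemmas~5.2 and A.2 of \cite{DefoBottBachUsun2022}). Its proof telescopes $a_j^2/(\epsilon+b_j)\le\log(\epsilon+b_j)-\log(\epsilon+\beta_2 b_{j-1})$, and that step needs $b_j-\beta_2 b_{j-1}=a_j^2$: numerator and denominator must be generated by the same sequence.

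In $\sum_k\|u_k\|_2^2=\sum_{k,i}m_{k,i}^2/(\epsilon+\tilde{v}_{k,i})$, the numerator comes from $g$ and the denominator from $\tilde{g}$, so the lemma does not apply.
\begin{itemize}
\item Your first fallback gives only $\sum_k\|u_k\|_2^2\le dK(M^2-\epsilon)/((1-\beta_1)^2\epsilon)$. This is linear in $K$ with a factor $1/\epsilon$, so after dividing by $\tilde K$ it leaves a non-vanishing error of size $\epsilon^{-1}$, not the stated logarithmic one.
\item Your second fallback is just a rewriting; it still has $g$ over $\tilde{v}$.
\item The factor $M$ in the first term of $E$ does not come from bounding $g$. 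It comes from multiplying through by $2M/\alpha$, the reciprocal of the coefficient produced by $\sqrt{\epsilon+\hat{v}_{k,j+1,i}}\le M\alpha_k/(\alpha(1-\beta_1))$.
\end{itemize}
Your decorrelation step has the same defect. The correction can be bounded, but what comes out is $M\sqrt{j+1}\,\beta_2^{-j}\,\mathbb{E}[g_{k-j,i}^2/(\epsilon+\tilde{v}_{k-j,i})]$, with $g$ rather than $\tilde{g}$ in the numerator; these are the paper's $\|\hat{u}_{k-j}\|_2^2$ terms. Putting $\tilde{g}^2$ there instead is not free. ``Bounding the $g$-factor by $M$'' leaves an unmatched $1/\sqrt{\epsilon+\tilde{v}}$ (or $1/\sqrt{\epsilon+\hat{v}}$), which is a factor as large as $M/\sqrt{\epsilon}$.

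No other bookkeeping will close this, because the stated inequality is false when nothing ties $g_k$ to $\tilde{g}_k$. The hypotheses constrain $\tilde{g}_k$ only through $\|\tilde{g}_k\|_\infty$, so the following is admissible:
\begin{itemize}
\item $d=1$, $\beta_1=0$, $\tilde{g}_k\equiv 0$, and $g_k=\nabla f(w_k)$;
\item $f(w)=w^2/2$ for $|w|\le 1$ and $f(w)=|w|-\tfrac12$ otherwise, so $L=1$, $f_{\inf}=0$, and $M^2=1+\epsilon$.
\end{itemize}
Then $\tilde{v}_k=0$ and $w_{k+1}=w_k-\eta_k\nabla f(w_k)$, with $\eta_k=\alpha_k/\sqrt{\epsilon}$ nondecreasing and $\eta_1=\alpha/\sqrt{\epsilon}$. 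If $\eta_1>3$ and $w_1=2$, induction gives $1<|w_k|<\eta_k-1$ for all $k$, since $|w_{k+1}|=\eta_k-|w_k|$. Hence $\|\nabla f(w_k)\|_2^2=1$ for every $k$, and the left-hand side equals $1$ for every $K$.

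As $K\to\infty$, the right-hand side tends to $E\log(1/\beta_2)$ with $E=\frac{\alpha M}{1-\beta_2}+\frac{12M^2}{\sqrt{1-\beta_2}}$. For $\beta_2=0.999$, $\alpha=0.1$, $\epsilon=10^{-12}$, this limit is about $0.48<1$.

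The paper's proof fails at exactly the point you flagged. It applies Lemma~\ref{lem.adam.u} to $\sum_k\|u_k\|_2^2$ and $\sum_k\|\hat{u}_k\|_2^2$ with $\tilde{v}_{K,i}$ inside the logarithm, although $m_k$ and $g_k$ are not the sequence that generates $\tilde{v}_k$.

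A correct theorem needs one of two changes. One option is an extra hypothesis linking the two gradients, for instance $g_{k,i}^2\le c\,\tilde{g}_{k,i}^2$. Under it, each $g$-over-$\tilde{v}$ sum is at most $c$ times a telescoping $\tilde{g}$-over-$\tilde{v}$ sum, and the argument goes through with an extra factor $c$ in $E$. The other option is to accept $\epsilon$-dependent, non-logarithmic error terms.
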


The upper bound on the expected squared-norm of the gradient in Theorem~\ref{th.lehi} can be made as small as desired.  For a fixed $\beta_1$, one can choose $\beta_2$ sufficiently close to 1, then $\alpha$ sufficiently close to 0, and finally the iteration limit $K$ sufficiently large such that each term in the bound is as small as desired.

\section{Numerical Results}\label{sec.numerical}

We tested the performance of LEHI on a diverse set of tasks including regression, image classification, and language modeling using benchmark datasets. These tasks utilize a range of models including multilayer perceptrons (MLP), convolutional neural networks (CNNs), and transformers. We examined the training and testing performance of LEHI against Adam and AdamW. We also tested a hybrid algorithm, which we call LEHIBRID, that in every other iteration of LEHI replaces $\tilde{g}_k$ with $g_k$. (This essentially makes LEHIBRID a hybrid between LEHI and Adam.) For large language model training, we additionally included Sophia-G, a modern second-order optimizer \cite{ICLR2024_06960915}. To select the optimal learning rate for each algorithm for each problem, we adopted a confidence bound selection strategy. This ensures that the chosen hyperparameter yielded a solution that is both high-performing and stable. Mathematically, for a given metric $M$, we defined the selection score $S = \mu_K \pm c \cdot \sigma_K$, where $\mu_K$ and $\sigma_K$ are the mean and the standard deviation of $M$ over the last $K$ epochs in a run of the algorithm for the given problem, respectively. For the regression problem that we tested, the aim was to minimize the upper confidence bound of the testing loss ($S=\mu_K+  c \cdot \sigma_K$), for the  language modeling problem that we tested, the aim was to minimize the upper confidence bound of the validation loss ($S=\mu_K+  c \cdot \sigma_K$), and for the classification problems that we tested, the aim was to maximize the lower confidence bound of testing accuracy ($S=\mu_K- c \cdot \sigma_K$). In all cases, we used $c=2$ and averaged selection scores across three independent runs with random seeds $\{0,1,2\}$ to select the learning rate. The window $K$ is chosen at around the last $10\%$ of the number of epochs. Detailed reports for our learning rate search and final results can be found in Appendix \ref{app.sub.tuning} and \ref{app.sub.add}.



\subsection{UCI Protein}

UCI Protein is a low-dimensional regression problem with a moderately sized dataset \cite{physicochemical_properties_of_protein_tertiary_structure_265}. We trained an MLP with $9$ input neurons, $1$ hidden layer with $100$ neurons, and $1$ output neuron. ReLU activation was used at the hidden layer. We trained the model with $200$ epochs. For each optimizer, the best learning rate was selected based on the testing loss score across the final $10$ epochs. All other hyperparameters were held fixed: $\beta_1=0.9, \beta_2=0.999, \epsilon=10^{-7}$ for all optimizers, and \texttt{weight decay}$=10^{-2}$ for AdamW.

For this experiment, both LEHI (green) and LEHIBRID (purple) demonstrated consistent gains in convergence speed and final error reduction. In the training phase, our new methods utilized a higher learning rate of $0.1$ and reached the $ 2.4\times 10^{-1}$ loss threshold well before Adam (red) and AdamW (blue) (Figure \ref{fig:protein_loss}). The testing loss curves further validate these findings. On the testing set, LEHI and LEHIBRID consistently maintained a lower loss than the Adam baselines.

\begin{figure}[t]
  \centering
  \begin{tabular}{cc}
    \includegraphics[width=0.33\textwidth]{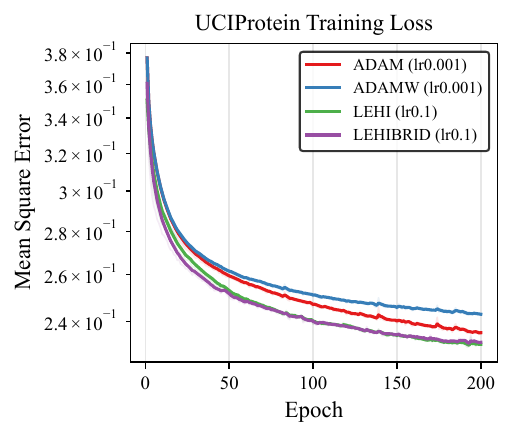} &
    \includegraphics[width=0.33\textwidth]{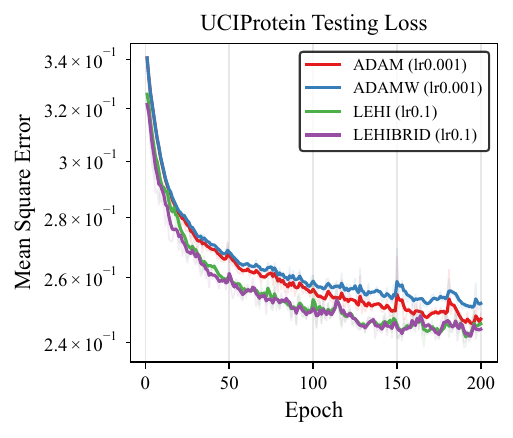} \\
    (a) Training loss & (b) Testing loss
  \end{tabular}
  \caption{\textbf{UCI Protein} loss curves. Learning rate was selected from $\{10^{-1}, 3\times10^{-2},10^{-2}, 3\times10^{-3}, 10^{-3}, 3 \times 10^{-4}, 10^{-4}\}$ based on the best testing performance. EMA smoothing ($\alpha=0.3$) is applied. Faded lines show raw curves.}
  \label{fig:protein_loss}
\end{figure}

\subsection{MNIST}
MNIST is a handwritten digit classification benchmark consisting of grayscale images of size $28 \times 28$ across $10$ classes \cite{1571417126193283840}. We trained an MLP with $28 \times 28$ input neurons, $1$ hidden layer with $50$ neurons, and $1$ output neuron. ReLU activation was used at the hidden layer. We trained the model with $25$ epochs. For each optimizer, the best learning rate was selected based on the testing accuracy score across the final $3$ epochs. All other hyperparameters were held fixed: $\beta_1=0.9, \beta_2=0.999, \epsilon=10^{-7}$ for Adam and AdamW, $\epsilon=10^{-2}$ for LEHI and LEHIBRID, and \texttt{weight decay}$=10^{-2}$ for AdamW. 

\begin{figure}[t]
  \centering
  \begin{tabular}{cc}
    \includegraphics[width=0.33\textwidth]{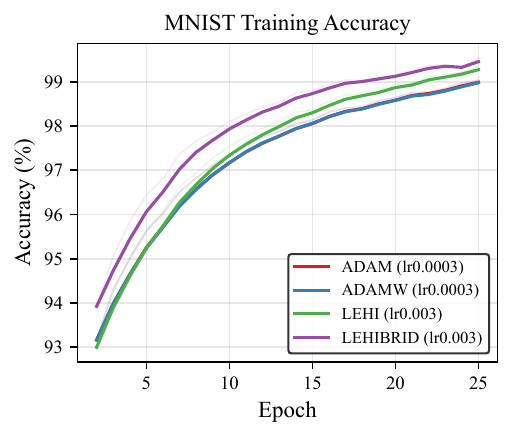} &
    \includegraphics[width=0.33\textwidth]{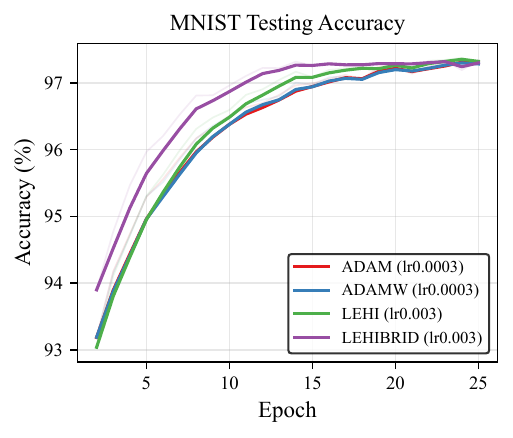} \\
    (a) Training accuracy & (b) Testing accuracy
  \end{tabular}
  \caption{\textbf{MNIST} accuracy curves. Learning rate was selected from $\{10^{-1}, 3\times10^{-2},10^{-2}, 3\times10^{-3}, 10^{-3}, 3 \times 10^{-4}, 10^{-4}\}$ based on the best testing performance. EMA smoothing ($\alpha=0.6$) is applied. First $2$ epochs are omitted to highlight differences. Faded lines show raw curves.}
  \label{fig:mnist_acc}
\end{figure}





In these experiments with the MNIST dataset, the training curves and the testing curves (Figure \ref{fig:mnist_acc}) exhibit similar trajectories. LEHIBRID (purple) demonstrated a clear advantage in both convergence speed and final accuracy compared to Adam (red) and AdamW (blue), maintaining a lead from the first epoch. Meanwhile, LEHI (green) overlapped with Adam and AdamW during the early epochs, but finally accelerated and surpassed the baseline methods.

\subsection{CIFAR-100}
CIFAR-100 extends CIFAR-10 (see CIFAR-10 experiments in Appendix \ref{app.sub.cifar10}) to a more challenging classification task with $100$ classes, while maintaining the same $32 \times 32$ image resolution \cite{krizhevsky2009learning}. We trained a ResNet-18 network using a minibatch size of $256$ with $150$ epochs. For each optimizer, the best learning rate was selected based on the testing accuracy score across the final $10$ epochs. All other hyperparameters were held fixed: $\beta_1=0.9, \beta_2=0.999$ for all optimizers, $\epsilon=10^{-7}$ for Adam and AdamW, $\epsilon=10^{-2}$ for LEHI and LEHIBRID, and \texttt{weight decay}$=10^{-2}$ for AdamW.

For these experiments with CIFAR-100, the training and testing accuracy trajectories (Figure \ref{fig:cifar100_acc}) exhibit trends comparable to those observed in the CIFAR-10 experiments. 
During the training process, although LEHIBRID (purple) and LEHI (green) initially showed a lower accuracy compared to Adam (red) and AdamW (blue), they demonstrated sustained learning throughout the 150-epoch duration. By approximately epoch 110, LEHI and LEHIBRID surpassed Adam and AdamW, ultimately achieving higher training accuracy than the baseline methods. On the testing set, the zoomed-in subplot further highlights that LEHI and LEHIBRID maintained a consistent lead from epoch $135$ to epoch $150$. In contrast, the baseline Adam and AdamW methods exhibited a relative plateauing in performance.

\begin{figure}[t]
  \centering
  \begin{tabular}{cc}
    \includegraphics[width=0.33\textwidth]{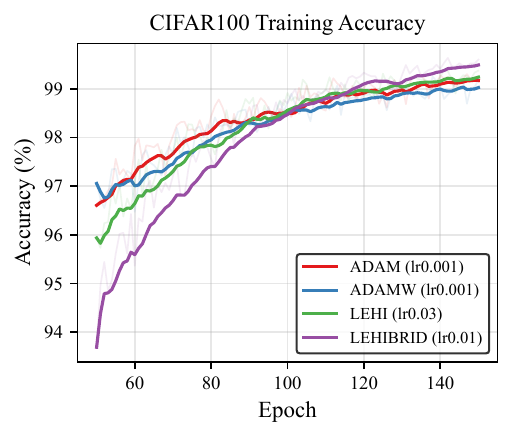} &
    \includegraphics[width=0.33\textwidth]{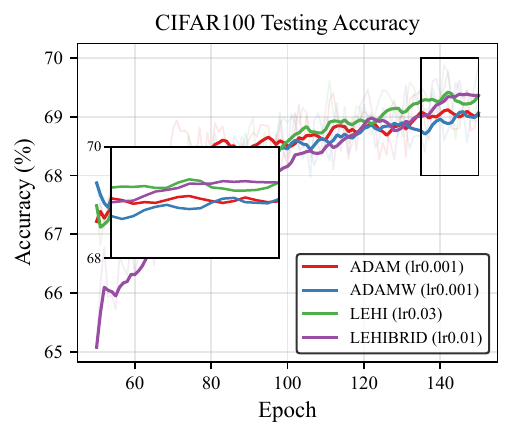} \\
    (a) Training accuracy & (b) Testing accuracy
  \end{tabular}
  \caption{\textbf{CIFAR100} accuracy curves. Learning rate was selected from $\{10^{-1}, 3\times10^{-2},10^{-2}, 3\times10^{-3}, 10^{-3}, 3 \times 10^{-4}, 10^{-4}\}$ based on the best testing performance. EMA smoothing ($\alpha=0.15$) is applied. First $50$ epochs are omitted to highlight differences. Faded lines show raw curves.}
  \label{fig:cifar100_acc}
\end{figure}

\subsection{Penn Treebank}
Penn Treebank (PTB) is standard for language modeling \cite{marcus-etal-1993-building}. Following common practice, we use the PTB corpus for next-token prediction, evaluating performance in an autoregressive language modeling setting. We trained a GPT-2 Small Transformer model from scratch using a fixed context length of $128$ tokens, and a minibatch size of $64$ with $25$ epochs. For each optimizer, the best learning rate was selected based on the validation loss (or log perplexity) score across the final $3$ epochs. All other hyperparameters were held fixed: $\beta_1=0.9, \beta_2=0.999$ for Adam and LEHI series, $\epsilon=10^{-7}$ for Adam and AdamW, $\epsilon=10^{-2}$ for LEHI and LEHIBRID, \texttt{weight decay}$=10^{-1}$ for AdamW, and $\beta_1=0.965,\beta_2=0.99,\epsilon=10^{-12},k=10,\rho=0.05$, \texttt{weight decay}$=0.2$ for Sophia-G.

\begin{figure}[t]
  \centering
  \begin{tabular}{cc}
    \includegraphics[width=0.33\textwidth]{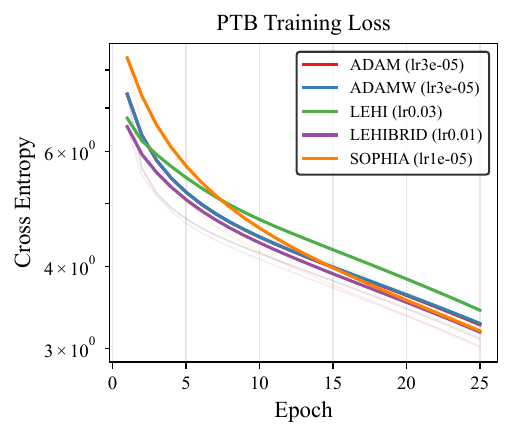} &
    \includegraphics[width=0.33\textwidth]{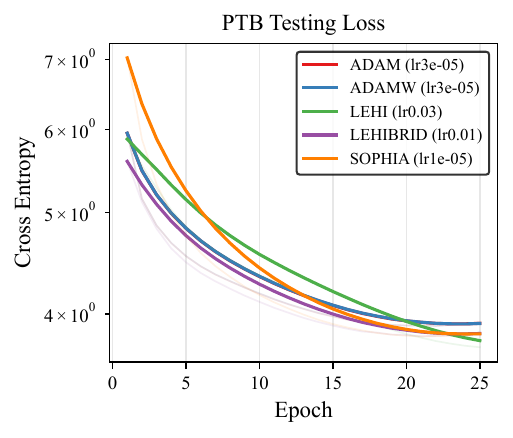} \\
    (a) Training loss & (b) Testing loss
  \end{tabular}
  \caption{\textbf{Penn Treebank} loss curves. Learning rate was selected from $\{10^{-1}, 3\times10^{-2}, 10^{-2}, 3\times10^{-3}, 10^{-3}, 3 \times 10^{-4}, 10^{-4}, 3 \times 10^{-5}, 10^{-5}, 3\times 10^{-6}\}$ based on the best validation performance. EMA smoothing ($\alpha=0.3$) is applied. First epoch is omitted. Faded lines show raw curves.}
  \label{fig:ptb_loss}
\end{figure}



During the training phase, LEHIBRID (purple) maintained its trajectory below LEHI (green), Adam (red), AdamW (blue), and Sophia-G (orange) (Figure \ref{fig:ptb_loss}). On the testing set, both LEHI (green) and LEHIBRID (purple), with much larger learning rates, outperformed the Adam (red) and AdamW (blue) baselines, which utilized a much smaller learning rate around $ 10^{-5}$. Meanwhile, Sophia-G remained as a close competitor. As in other experiments, this showcases LEHI's ability to leverage larger learning rates effectively. 
Notably, the relative performance differences for the optimizers on the test set closely matched those during validation-based selection.

\subsection{FineWeb-Edu}

FineWeb-Edu is a large-scale language modeling dataset consisting of educational web pages filtered from FineWeb \cite{penedo2024finewebdatasetsdecantingweb}. We used a subset of FineWeb-Edu for next-token prediction and evaluated optimization performance by training a Llama-3.2-1B model \cite{grattafiori2024llama3herdmodels} from scratch. The model has $1.23$B parameters, which provides a larger-scale training setting while remaining computationally tractable. We trained the model using a fixed context length of $1024$ tokens, and a minibatch size of $16$ with $3000$ steps. For each optimizer, the best learning rate was selected based on the testing loss (or log perplexity) score across the final $300$ steps. All other hyperparameters were held fixed: $\beta_1=0.9, \beta_2=0.999$ for Adam and LEHI series, $\epsilon=10^{-7}$ for Adam and AdamW, $\epsilon=10^{-2}$ for LEHI and LEHIBRID, \texttt{weight decay}$=10^{-1}$ for AdamW, and $\beta_1=0.965,\beta_2=0.99,\epsilon=10^{-12},k=10,\rho=0.05$, \texttt{weight decay}$=0.2$ for Sophia-G.

On FineWeb-Edu, LEHI (green) and LEHIBRID (purple) achieved the best testing loss (Figure \ref{fig:fineweb-edu_test_loss}). The two methods exhibited nearly overlapping trajectories, suggesting that the hybrid scheme did not substantially change performance. Adam (red), AdamW (blue), and Sophia-G remained competitive, but converged to higher final losses than LEHI and LEHIBRID. Notably, the selected learning rates for LEHI, LEHIBRID, and Sophia-G were larger than those selected for Adam and AdamW, consistent with the intuition that curvature-aware scaling can support more aggressive steps.

\begin{figure}[t]
    \centering
    \includegraphics[width=0.33\columnwidth]{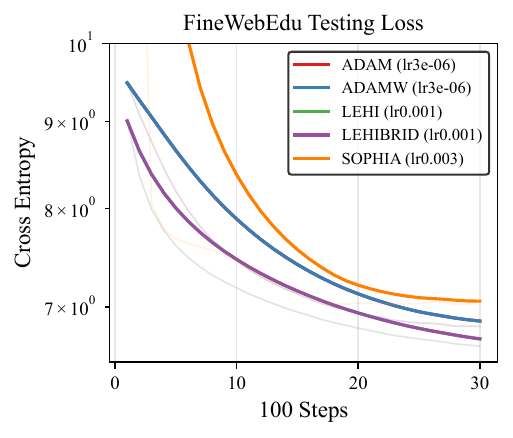}
    \caption{Testing loss (log perplexity) on \textbf{FineWeb-Edu}. Learning rate was selected from a coarse list $\{10^{-1}, 10^{-2},  10^{-3}, 10^{-4},  10^{-5}, 10^{-6},10^{-7}\}$ then refined based on the best testing performance for each optimizer. EMA smoothing ($\alpha=0.2$) is applied. First epoch is omitted to highlight differences. Faded lines show raw curves. Training loss not included since the evaluation was based on steps.}
    \label{fig:fineweb-edu_test_loss}
\end{figure}



\subsection{Sensitivity Analysis on Penn Treebank}

In this subsection, we discuss the learning rate sensitivity of the optimizers on training GPT-2 Small with the Penn Treebank dataset based on the averaged performance on the validation set.

From Figure \ref{fig:ptb_lr_loss_curve}, one can observe both LEHI (green) and LEHIBRID (purple) are more robust with learning rate choices. At larger learning rate as $1 \times 10^{-1}$, LEHI and LEHIBRID retained their performance, while Adam (red), AdamW (blue) and Sophia-G (orange) showed signs of divergence. Additionally, Table \ref{tab:ptb-stability-stripped} shows a simplified summary of the sensitivity analysis results; more extensive results can be found in Table \ref{tab:ptb_stability_summary} in Appendix \ref{app.sub.add}. While LEHI and LEHIBRID were stable across all the grids between $3 \times 10^{-6}$ and $1 \times 10^{-1}$ with all $3$ random seeds, both Adam and AdamW had NaN issues, meaning the results completely diverged. Moreover, for runs without NaN issues, for example, Adam at learning rate $3 \times 10^{-3}$, there were loss function gradient spikes as well, as the largest infinity norm of loss function gradient was around $4.4 \times 10^{32}$.



\begin{figure}[t]
  \centering
  \begin{minipage}[t]{0.38\textwidth}
    \vspace{0pt}
    \centering
    \includegraphics[width=\linewidth]{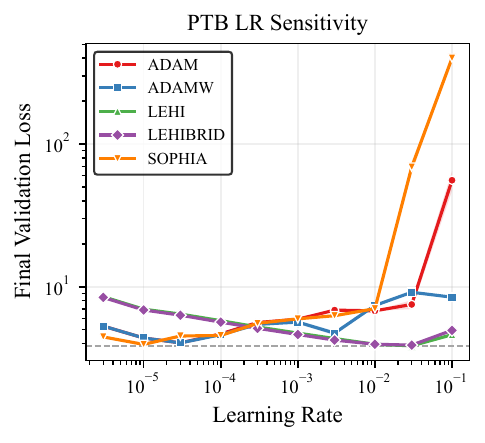}
    \caption{Last 3 epoch average validation loss versus learning rate on PTB. Shaded area presents $2$ times standard deviation across last $3$ epochs. Horizontal dashed line shows lowest average testing loss achieved by LEHI.}
    \label{fig:ptb_lr_loss_curve}
  \end{minipage}\hfill
  \begin{minipage}[t]{0.60\textwidth}
    \vspace{0pt}
    \centering
    \captionof{table}{Stability analysis on PTB across 3 seeds. LR denotes learning rate. $\|\nabla\|_{\infty}$ is the max loss function gradient norm seen. Status symbols: \checkmark (Stable), ! (Noisy), \texttimes (Diverged).}
    \label{tab:ptb-stability-stripped}
    \scriptsize
    \setlength{\tabcolsep}{3pt}
    \begin{tabular}{llccr}
    \toprule
    Opt. & LR & $\|\nabla\|_{\infty}$ & NaNs & Stat. \\
    \midrule
    Adam 
        & 3e-6 & 1.2e-1 & 0 & \checkmark \\
        & 1e-3 & 5.2e-2 & 0 & \checkmark \\
        & 3e-3 & 4.4e+32 & 0 & ! \\
        & 1e-1 & 1.9e+29 & 2 & \texttimes \\
    \midrule
    AdamW 
        & 3e-6 & 1.4e-1 & 0 & \checkmark \\
        & 3e-3 & 6.7e-2 & 0 & \checkmark \\
        & 3e-2 & 2.0e+30 & 1 & \texttimes \\
        & 1e-1 & 2.4e+17 & 0 & ! \\
    \midrule
    \textbf{LEHI} 
        & 3e-6 & 0.14 & 0 & \checkmark \\
        & 1e-2 & 6.4e-2 & 0 & \checkmark \\
        & 1e-1 & \textbf{3.2e-2} & 0 & \checkmark \\
    \midrule
    \textbf{LEHIBRID} 
        & 3e-6 & 0.14 & 0 & \checkmark \\
        & 1e-2 & 6.2e-2 & 0 & \checkmark \\
        & 1e-1 & \textbf{4.7e-2} & 0 & \checkmark \\
    \midrule
    SOPHIA
        & 3e-6 & 0.12 & 0 & \checkmark \\
        & 3e-4 & 9.4 & 0 & \checkmark \\
        & 1e-2 & 3.4e+6 & 0 & ! \\
        & 1e-1 & 1.8e+22 & 0 & ! \\
    \bottomrule
\end{tabular}
  \end{minipage}
\end{figure}

\section{Conclusion}\label{sec.conclusion}

We have proposed, analyzed, and tested a new momentum-based stochastic optimization method for supervised learning that makes use of an auxiliary loss function to approximate second-order derivatives of the original loss function with respect to predictions. The algorithm has computational cost and convergence guarantees that are comparable to Adam, and it along with a hybrid scheme offers consistent performance improvements over Adam and AdamW on a range of test problems. The main challenge of employing the method is the derivation of an appropriate auxiliary loss function. That said, we conjecture that the flexibility offered by the choice of auxiliary loss could offer substantive performance improvements in a wide variety of settings.

\bibliographystyle{plain}
\bibliography{references}

@article{BottCurtNoce2018,
author = {L\'{e}on Bottou and Frank E. Curtis and Jorge Nocedal},
title = {{Optimization Methods for Large-Scale Machine Learning}},
journal = {{SIAM Review}},
volume = {60},
number = {2},
pages = {223--311},
year = {2018}
}

@article{DefoBottBachUsun2022,
  title={A Simple Convergence Proof of Adam and Adagrad},
  author={Alexandre D'efossez and L{\'e}on Bottou and Francis R. Bach and Nicolas Usunier},
  journal={Trans. Mach. Learn. Res.},
  year={2020},
  volume={2022},
  url={https://api.semanticscholar.org/CorpusID:225213299}
}

@article{DuchHazaSing2011,
author = {Duchi, John and Hazan, Elad and Singer, Yoram},
title = {Adaptive Subgradient Methods for Online Learning and Stochastic Optimization},
year = {2011},
publisher = {JMLR.org},
volume = {12},
journal = {J. Mach. Learn. Res.},
pages = {2121--2159}
}

@article{KingBa2014,
  author = {Kingma, Diederik P. and Ba, Jimmy},
  journal = {arXiv preprint arXiv:1412.6980},
  title = {Adam: A method for stochastic optimization},
  year = {2014}
}

@misc{physicochemical_properties_of_protein_tertiary_structure_265,
  author       = {Rana, Prashant},
  title        = {{Physicochemical Properties of Protein Tertiary Structure}},
  year         = {2013},
  howpublished = {UCI Machine Learning Repository},
  note         = {{DOI}: https://doi.org/10.24432/C5QW3H}
}

@article{1571417126193283840,
  author="LeCun, Y.",
  title="THE MNIST DATABASE of handwritten digits",
  journal="http://yann.lecun.com/exdb/mnist/",

  year = {1998},
  URL="https://cir.nii.ac.jp/crid/1571417126193283840"
}

@techreport{krizhevsky2009learning,
  title={Learning multiple layers of features from tiny images},
  author={Krizhevsky, Alex},
  institution={University of Toronto},
  year={2009}
}

@article{marcus-etal-1993-building,
    title = "Building a Large Annotated Corpus of {E}nglish: The {P}enn {T}reebank",
    author = "Marcus, Mitchell P.  and
      Santorini, Beatrice  and
      Marcinkiewicz, Mary Ann",
    editor = "Hirschberg, Julia",
    journal = "Computational Linguistics",
    volume = "19",
    number = "2",
    year = "1993",
    address = "Cambridge, MA",
    publisher = "MIT Press",
    url = "https://aclanthology.org/J93-2004/",
    pages = "313--330"
}

@article{RobbMonr51,
  author  = {Robbins, H. and Monro, S.},
  journal = {The Annals of Mathematical Statistics},
  number  = {3},
  pages   = {400--407},
  title   = {{A Stochastic Approximation Method}},
  volume  = {22},
  year    = {1951}
}

@incollection{RobbSieg71,
  author    = {Robbins, Herbert and Siegmund, David},
  title     = {A convergence theorem for nonnegative almost supermartingales and some applications},
  booktitle = {Optimizing Methods in Statistics},
  publisher = {Academic Press},
  editor    = {Jagdish S. Rustagi},
  year      = {1971}
}

@Misc{TielHint12,
  author =    {Tijmen Tieleman and Geoffrey Hinton},
  title =     {Lecture 6.5. {RMSPROP}: Divide  the  gradient  
                  by  a  running  average  of its recent magnitude.},
  howpublished = {COURSERA: Neural Networks for Machine Learning},
  year =      {2012},
}

@article{loshchilov2017decoupled,
  title={Decoupled weight decay regularization},
  author={Loshchilov, Ilya and Hutter, Frank},
  journal={arXiv preprint arXiv:1711.05101},
  year={2017}
}

@inproceedings{Dozat2016,
  title={Incorporating Nesterov Momentum into Adam},
  author={Timothy Dozat},
  year={2016},
  booktitle={Proceedings of the 4th International Conference on Learning Representations}
}

@misc{liu2021varianceadaptivelearningrate,
      title={On the Variance of the Adaptive Learning Rate and Beyond}, 
      author={Liyuan Liu and Haoming Jiang and Pengcheng He and Weizhu Chen and Xiaodong Liu and Jianfeng Gao and Jiawei Han},
      year={2021},
      eprint={1908.03265},
      archivePrefix={arXiv},
      primaryClass={cs.LG},
      url={https://arxiv.org/abs/1908.03265}, 
}

@article{reddi2019convergence,
  title={On the convergence of adam and beyond},
  author={Reddi, Sashank J and Kale, Satyen and Kumar, Sanjiv},
  journal={arXiv preprint arXiv:1904.09237},
  year={2019}
}

@inproceedings{ICLR2024_06960915,
 author = {Liu, Hong and Li, Zhiyuan and Hall, David and Liang, Percy and Ma, Tengyu},
 booktitle = {International Conference on Learning Representations},
 editor = {B. Kim and Y. Yue and S. Chaudhuri and K. Fragkiadaki and M. Khan and Y. Sun},
 pages = {1621--1650},
 title = {Sophia: A Scalable Stochastic Second-order Optimizer for Language Model Pre-training},
 url = {https://proceedings.iclr.cc/paper_files/paper/2024/file/06960915ba8674c7a898ec0b472b80ff-Paper-Conference.pdf},
 volume = {2024},
 year = {2024}
}

@misc{penedo2024finewebdatasetsdecantingweb,
      title={The FineWeb Datasets: Decanting the Web for the Finest Text Data at Scale}, 
      author={Guilherme Penedo and Hynek Kydlíček and Loubna Ben allal and Anton Lozhkov and Margaret Mitchell and Colin Raffel and Leandro Von Werra and Thomas Wolf},
      year={2024},
      eprint={2406.17557},
      archivePrefix={arXiv},
      primaryClass={cs.CL},
      url={https://arxiv.org/abs/2406.17557}, 
}

@misc{grattafiori2024llama3herdmodels,
      title={The Llama 3 Herd of Models}, 
      author={Aaron Grattafiori et al.},
      year={2024},
      eprint={2407.21783},
      archivePrefix={arXiv},
      primaryClass={cs.AI},
      url={https://arxiv.org/abs/2407.21783}, 
}

\pagebreak
\appendix
\section{Loss Function Derivation}\label{app.auxiliary}

Our aim in this appendix is to provide a few examples of reasonable choices for the vector $v(p(w,x),y) \in \mathbb{R}^q$ such that \eqref{eq.v} holds.  In particular, we provide examples of a loss function $\ell$ such that with corresponding $z$ one obtains that
\begin{equation}\label{eq.diag}
  \text{diag}(v(p(w,x),y) v(p(w,x),y)^T) = \text{diag}(\nabla_{pp}^2 \ell(p(w,x),y)),\ \ \text{where}\ \ v := \nabla_p z
\end{equation}
and $\text{diag}(\cdot)$ denotes the diagonal elements of its symmetric matrix argument.  For simplicity, as in \S\ref{sec.composite}--\S\ref{sec.canonical}, our derivations in this appendix consider a single input-output pair $(x,y)$.  The formulas are easily generalized to sums over a set of input-output pairs.  Observe that since our interest is only in derivatives of $\ell$ and $z$ with respect to $p$, we can ignore the inputs $w$ and $x$ for $p$ and focus simply on the losses $\ell$ and $z$ with respect to the inputs $p \in \mathbb{R}^q$ and $y \in \mathbb{R}^q$.

\subsection{Mean Squared Error (MSE) Loss}

The mean squared error (MSE) loss with respect to a prediction $p$ and true output $y$ is defined by
\begin{equation*}
  \ell(p,y) = \frac{1}{2} \|p - y\|_2^2,\ \ \text{so}\ \ \nabla_p \ell(p,y) = p - y\ \ \text{and}\ \ \nabla_{pp}^2 \ell(p,y) = I.
\end{equation*}
Thus, one can choose $z(p,y) = \mathds{1}^Tp$, where $\mathds{1}$ denotes a vector of ones of appropriate dimension, in order to have $v(p,y) = \nabla_p z(p,y) = \mathds{1}$ such that \eqref{eq.diag} holds.  Notice that $\nabla_p \ell(p,y) = p - y$ and $\nabla_p z(p,y) = \mathds{1}$ can differ significantly, such as when $p \approx y$, as may be likely to occur when minimizing MSE loss.  More generally, for MSE loss that is the average over $N$ input/output pairs, one can similarly choose $z(p,y) = \frac{1}{\sqrt{N}} \mathds{1}^Tp$ since in such a setting one has $\nabla_{pp}^2 \ell(p,y) = \frac{1}{N} I$.

\subsection{Binary Cross-Entropy Loss}

The binary cross-entropy loss with $p \in \mathbb{R}$ and $y \in \mathbb{R}$ is defined by
\begin{equation*}
  \ell(p,y) = -y \log(\sigma(p)) - (1 - y) \log(1 - \sigma(p)),\ \ \text{where}\ \ \sigma(p) = \frac{1}{1 + e^{-p}},
\end{equation*}
from which it follows that
\begin{equation*}
  \frac{d\ell}{dp}(p,y) = \sigma(p) - y\ \ \text{and}\ \ \frac{d^2\ell}{dp^2}(p,y) = \sigma(p) (1 - \sigma(p)).
\end{equation*}
Now consider the auxiliary loss function defined according to
\begin{equation*}
  z(p,y) = \arcsin(\tanh(p/2)),
\end{equation*}
for which one finds that
\begin{align*}
  v(p,y) = \frac{dz}{dp}(p,y)
  &= \left. \frac{d z}{d \tanh(\cdot/2)} \right|_{\tanh(p/2)} \left. \frac{d \tanh(\cdot/2)}{d p} \right|_p \\
  &= \frac{1}{\sqrt{1 - \tanh^2 (p/2)}} \left( \frac12 \left( 1 - \tanh^2 (p/2) \right) \right) \\
  &= \frac12 \sqrt{1 - \tanh^2(p/2)} \\
  &= \frac12 \sqrt{1 - \left( \frac{e^p - 1}{e^p + 1} \right)^2} \\
  &= \frac12 \sqrt{\frac{(e^p + 1)^2}{(e^p + 1)^2} - \frac{e^{2p} - 2e^p + 1}{(e^p + 1)^2}} \\
  &= \frac{e^{p/2}}{e^p + 1} = \frac{1}{e^{p/2} + e^{-p/2}}.
\end{align*}
Consequently, one finds that
\begin{equation*}
  (v(p,y))^2 = \left( \frac{1}{e^{p/2} + e^{-p/2}} \right)^2 = \frac{1}{e^p + 2 + e^{-p}} = \sigma(p) (1 - \sigma(p)) = \frac{d^2\ell}{dp^2}(p,y).
\end{equation*}
More generally, for binary cross-entropy loss over $N$ pairs, one can similarly choose $z(p,y) = \frac{1}{\sqrt{N}} \arcsin(\tanh(p/2))$.

\subsection{Multi-Class Cross-Entropy Loss}

A multi-class cross-entropy loss (in logit form) with $q$ classes, $p \in \mathbb{R}^q$, and $y \in \mathbb{R}^q$ is defined by
\begin{equation*}
  \ell(p,y) = -y^Tp + \log \left( \sum_{i=1}^q e^{p_i} \right),
\end{equation*}
where $p_i$ is the $i$th component of $p$, from which it follows for all $i \in \{1,\dots,q\}$ that
\begin{equation*}
  \frac{d\ell}{dp_i}(p,y) = \sigma(p_i,p) - y\ \ \text{and}\ \ \frac{d^2\ell}{dp_i^2}(p,y) = \sigma(p_i,p)(1 - \sigma(p_i,p)),\ \ \text{where}\ \ \sigma(p_i,p) = \frac{e^{p_i}}{\sum_{j=1}^q e^{p_j}}.
\end{equation*}
Now consider the auxiliary loss function defined according to
\begin{equation*}
  z(p,y) = \sum_{i=1}^q \arcsin(2 \sigma(p_i,p) - 1),
\end{equation*}
for which one finds for all $i \in \{1,\dots,q\}$ that
\begin{align*}
  v_i(p,y) = \frac{dz}{dp_i}(p,y)
  &\approx \left. \frac{dz}{d (2\sigma(\cdot,p) - 1)} \right|_{2\sigma(p_i,p)-1} \left. \frac{d(2\sigma(\cdot,p) - 1)}{dp_i} \right|_{p_i} \\
  &= \frac{1}{\sqrt{1 - (2\sigma(p_i,p) - 1)^2}} (2 \sigma(p_i,p) (1 - \sigma(p_i,p))) \\
  &= \frac{1}{\sqrt{1 - (2\sigma(p_i,p) - 1)^2}} \left( \frac12 \left( 1 - (2\sigma(p_i,p) - 1)^2 \right) \right) \\
  &= \frac12 \sqrt{1 - (2\sigma(p_i,p) - 1)^2}
\end{align*}
Consequently, one finds that
\begin{align*}
  (v_i(p,y))^2 \approx \frac14 (1 - (2\sigma(p_i,p) - 1)^2) &= \frac14 (1 - 4\sigma(p_i,p)^2 + 4\sigma(p_i,p) - 1) \\
  &= \sigma(p_i,p)(1 - \sigma(p_i,p)) = \frac{d^2\ell}{dp_i^2}(p,y).
\end{align*}
More generally, for multi-class cross-entropy loss over $N$ pairs, one can choose $\displaystyle z(p,y) = \frac{1}{\sqrt{N}} \sum_{i=1}^q \arcsin(2\sigma(p_i,p) - 1)$.

\section{Proof of Theorem~\ref{th.lehi}}\label{app.th.lehi}

The $i$th component of the step in iteration $k$ is given by $u_{k,i} := \frac{m_{k,i}}{\sqrt{\epsilon + \tilde{v}_{k,i}}}$.  However, for the purposes of our analysis, let us also define for all $(k,i)$ the related quantity $\hat{u}_{k,i} := \frac{g_{k,i}}{\sqrt{\epsilon + \tilde{v}_{k,i}}}$.  Note that for all $(k,i)$ and $j \in [k-1]$ one has
\begin{equation*}
  \tilde{v}_{k,i} = \sum_{l=1}^k \beta_2^{k-l} \tilde{g}_{l,i}^2 = \beta_2^{j} \tilde{v}_{k-j,i} + \sum_{l = k-j+1}^k \beta_2^{k-l} \tilde{g}_{l,i}^2.
\end{equation*}
In addition, for all $k$ and $j \in [k-1]$, with $\mathbb{E}_{k-j}$ denoting expectation taken up to the beginning of iteration $k-j$, let us also introduce for each $(k,i)$ and $j \in [k-1]$ the related quantity
\begin{equation*}
  \vhat_{k,j,i} = \beta_2^j \tilde{v}_{k-j,i} + \mathbb{E}_{k-j}\left[\sum_{l = k-j+1}^k \beta_2^{k-l} \tilde{g}_{l,i}^2\right].
\end{equation*}

This first lemma will be used in the proof of the theorem to provide a bound for $\|u_k\|_2^2$.  Since it merely involves some tedious calculations with real-number sequences, we refer to proofs in \cite{DefoBottBachUsun2022}.

\begin{lemma}\label{lem.adam.u}
  Let $(\epsilon,\beta_2,\beta_1)$ be given as in Algorithm~\ref{alg.lehi} and let $\{a_k\}$ be a sequence of real numbers.  For any $k \in \mathbb{N}$, with $b_k := \sum_{j=1}^k \beta_2^{k-j} a_j^2$ and $c_k = \sum_{j=1}^k \beta_1^{k-j} a_j$, one has that
  \begin{align*}
    \sum_{j=1}^k \frac{c_j^2}{\epsilon + b_j} &\le \frac{1}{(1-\beta_1)(1-\beta_1/\beta_2)} \left(\log \left(1 + \frac{b_k}{\epsilon}\right) - k \log (\beta_2) \right) \\ \text{and}\ \ 
    \sum_{j=1}^k \frac{a_j^2}{\epsilon + b_j} &\le \log\left(1 + \frac{b_k}{\epsilon} \right) - k \log(\beta_2).
  \end{align*}
\end{lemma}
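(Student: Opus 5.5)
We prove the second inequality first and then deduce the first from it, following the argument of \cite{DefoBottBachUsun2022}.

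\textbf{Second inequality.} Set $b_0 := 0$. Since $b_j = \beta_2 b_{j-1} + a_j^2$, we have $a_j^2 = b_j - \beta_2 b_{j-1}$. Using $\log(x) \le x - 1$, or equivalently $1 - y/x \le \log(x/y)$ for $x \ge y > 0$, with $x = \epsilon + b_j$ and $y = \epsilon + \beta_2 b_{j-1}$, we get
\begin{equation*}
  \frac{a_j^2}{\epsilon + b_j} = 1 - \frac{\epsilon + \beta_2 b_{j-1}}{\epsilon + b_j} \le \log\left(\frac{\epsilon + b_j}{\epsilon + \beta_2 b_{j-1}}\right).
\end{equation*}
Next, $\epsilon + \beta_2 b_{j-1} \ge \beta_2(\epsilon + b_{j-1})$, because $\beta_2 < 1$. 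Hence each term is at most $\log(\epsilon + b_j) - \log(\epsilon + b_{j-1}) - \log\beta_2$. Summing over $j \in [k]$ telescopes to $\log(1 + b_k/\epsilon) - k\log\beta_2$, as required.

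\textbf{First inequality.} This is the main step. Write $c_j = \sum_{l=1}^j \beta_1^{j-l} a_l$ and apply Cauchy--Schwarz with weights $\beta_1^{j-l}$:
\begin{equation*}
  c_j^2 \le \left(\sum_{l=1}^j \beta_1^{j-l}\right)\sum_{l=1}^j \beta_1^{j-l} a_l^2 \le \frac{1}{1-\beta_1}\sum_{l=1}^j \beta_1^{j-l} a_l^2 .
\end{equation*}
Dividing by $\epsilon + b_j$ would leave mismatched indices, so we instead compare $b_j$ with $b_l$. Since $b_j \ge \beta_2^{j-l} b_l$ for $l \le j$, we have $\epsilon + b_j \ge \beta_2^{j-l}(\epsilon + b_l)$. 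Therefore
\begin{equation*}
  \frac{c_j^2}{\epsilon + b_j} \le \frac{1}{1-\beta_1}\sum_{l=1}^j \left(\frac{\beta_1}{\beta_2}\right)^{j-l}\frac{a_l^2}{\epsilon + b_l}.
\end{equation*}
The hypothesis $\beta_2 > \beta_1$ in Algorithm~\ref{alg.lehi} makes the ratio $\beta_1/\beta_2$ lie in $[0,1)$.

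Now sum over $j \in [k]$ and exchange the order of summation. The resulting geometric sum in $j-l$ is bounded by $1/(1-\beta_1/\beta_2)$, which gives
\begin{equation*}
  \sum_{j=1}^k \frac{c_j^2}{\epsilon + b_j} \le \frac{1}{(1-\beta_1)(1-\beta_1/\beta_2)}\sum_{l=1}^k \frac{a_l^2}{\epsilon + b_l}.
\end{equation*}
Applying the second inequality to the right-hand side yields the first inequality.

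The delicate point is matching the denominators across indices: the weighted Cauchy--Schwarz split must leave the factor $\beta_1^{j-l}$ attached to $a_l^2$, so that it can absorb the $\beta_2^{-(j-l)}$ lost in passing from $\epsilon + b_j$ to $\epsilon + b_l$. This is exactly where $\beta_1 < \beta_2$ is needed.
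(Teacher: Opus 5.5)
Your proof is correct and follows essentially the same approach as the paper. The paper gives no argument of its own and simply cites Lemmas~5.2 and~A.2 of \cite{DefoBottBachUsun2022}, whose proofs are the ones you reproduce: the telescoping bound $a_j^2/(\epsilon+b_j)\le\log\frac{\epsilon+b_j}{\epsilon+\beta_2 b_{j-1}}$ with $\epsilon+\beta_2 b_{j-1}\ge\beta_2(\epsilon+b_{j-1})$, and the weighted Cauchy--Schwarz step combined with $\epsilon+b_j\ge\beta_2^{j-l}(\epsilon+b_l)$ and an exchange of summation. Your constants and your use of $0\le\beta_1<\beta_2<1$ (including the degenerate case $\beta_1=0$) all check out.
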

\begin{proof}
  Please refer to Lemma~A.2 and Lemma~5.2 in~\cite{DefoBottBachUsun2022}.
\end{proof}

Our first critical lemma shows a lower bound for the directional derivative of $f$ at $w_k$ in the direction $u_k$ for all $k \in \mathbb{N}$.

\begin{lemma}\label{lem.adam.descent}
  Under the conditions of Theorem~\ref{th.lehi}, for any $k \in \mathbb{N}$, Algorithm~\ref{alg.lehi} yields
  \begin{align*}
    \mathbb{E}[\nabla f(w_k)^T u_k]
    \geq&\ \frac{1}{2} \sum_{i\in[d]} \sum_{j=0}^{k-1} \beta_1^j \mathbb{E} \left[\frac{\nabla_i f(w_{k-j})^2}{\sqrt{\epsilon + \hat{v}_{k,j+1,i}}} \right] \\
    &\ -\frac{\alpha_k^2 L^2 \sqrt{1 - \beta_1}}{4M} \left(\sum_{l=1}^{k-1} \mathbb{E}[\|u_{k-l}\|_2^2] \sum_{j=l}^{k-1} \beta_1^j \sqrt{j} \right) \\
    & - \frac{3M}{\sqrt{1 - \beta_1}} \left( \sum_{j=0}^{k-1} \left( \frac{\beta_1}{\beta_2} \right)^k \sqrt{j+1} \mathbb{E}[\|\hat{u}_{k-j}\|_2^2] \right).
  \end{align*} 
\end{lemma}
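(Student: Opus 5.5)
The plan follows the proof of the corresponding descent lemma for Adam with momentum in \cite{DefoBottBachUsun2022} (their Lemma~A.1). The only change is that the denominator is built from $\tilde{g}$ rather than $g$, while the numerator uses $g$.

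\textbf{Step 1: split along the momentum sum.} Write $m_{k,i}=\sum_{j=0}^{k-1}\beta_1^j g_{k-j,i}$, so that
\[
\nabla_i f(w_k)u_{k,i}=\sum_{j=0}^{k-1}\beta_1^j \frac{\nabla_i f(w_k)\,g_{k-j,i}}{\sqrt{\epsilon+\tilde v_{k,i}}}.
\]
For each $j$, replace $\nabla_i f(w_k)$ by $\nabla_i f(w_{k-j})$:
\[
\nabla_i f(w_k)=\nabla_i f(w_{k-j})+\big(\nabla_i f(w_k)-\nabla_i f(w_{k-j})\big).
\]

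\textbf{Step 2: bound the gradient-drift terms.} By Lipschitz continuity and the triangle inequality,
\[
\|\nabla f(w_k)-\nabla f(w_{k-j})\|_2\le L\alpha_k\sum_{l=1}^{j}\|u_{k-l}\|_2,
\]
using that $\{\alpha_k\}$ is increasing. Apply Young's inequality $|ab|\le \frac{\lambda}{2}a^2+\frac{1}{2\lambda}b^2$ with $\lambda$ of order $\sqrt{1-\beta_1}/(M\sqrt{j})$. Then Cauchy--Schwarz gives $(\sum_{l\le j}\|u_{k-l}\|)^2\le j\sum_{l\le j}\|u_{k-l}\|^2$. Exchanging the order of summation produces the term $\frac{\alpha_k^2L^2\sqrt{1-\beta_1}}{4M}\sum_l \mathbb{E}\|u_{k-l}\|^2\sum_{j\ge l}\beta_1^j\sqrt j$. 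The companion term $\sum_j \beta_1^j \frac{\sqrt j}{M\sqrt{1-\beta_1}}\,(g_{k-j,i}^2/(\epsilon+\tilde v_{k,i}))$ will be absorbed into the $\hat u$ term in Step~3, after noting $\tilde v_{k,i}\ge\beta_2^j\tilde v_{k-j,i}$.

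\textbf{Step 3: the main term, via a decorrelated denominator.} For the term $\beta_1^j\,\nabla_i f(w_{k-j})\,g_{k-j,i}/\sqrt{\epsilon+\tilde v_{k,i}}$, replace the denominator by $\sqrt{\epsilon+\hat v_{k,j+1,i}}$. This quantity is $\mathcal F_{k-j}$-measurable, because the $\mathbb{E}_{k-j}$ integrates out all noise from iteration $k-j$ onward.

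Conditioning on $\mathcal F_{k-j}$ and using $\mathbb{E}[g_{k-j}|\mathcal F_{k-j}]=\nabla f(w_{k-j})$ gives the main quantity $\mathbb{E}[\nabla_i f(w_{k-j})^2/\sqrt{\epsilon+\hat v_{k,j+1,i}}]$.

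The error from swapping denominators is controlled by the identity
\[
\Big|\tfrac{1}{\sqrt{\epsilon+\tilde v}}-\tfrac{1}{\sqrt{\epsilon+\hat v}}\Big|\le \frac{|\hat v-\tilde v|}{\sqrt{\epsilon+\tilde v}\,\sqrt{\epsilon+\hat v}\,(\sqrt{\epsilon+\tilde v}+\sqrt{\epsilon+\hat v})}.
\]
The difference $|\hat v-\tilde v|$ involves only $\sum_{l=k-j}^{k}\beta_2^{k-l}(\tilde g_{l,i}^2+\mathbb{E}_{k-j}\tilde g_{l,i}^2)$. Each $\tilde g_{l,i}^2$ over its own contribution to $\sqrt{\epsilon+\tilde v}$ is bounded by $\sqrt{\tilde g_{l,i}^2}\le M$, and this is the only place the $\|\tilde g_k\|_\infty$ bound enters.

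Two Young's inequalities then follow. One gives back half of the main term, which explains the factor $\tfrac12$. The other yields terms like $\frac{M\sqrt{j+1}}{\sqrt{1-\beta_1}}\,\frac{g_{k-j,i}^2}{\epsilon+\tilde v_{k-j,i}}\,(\beta_1/\beta_2)^{j}$-type weights. The $\beta_2^{-j}$ arises from $\tilde v_{k,i}\ge\beta_2^j\tilde v_{k-j,i}$, which lets $\hat u_{k-j}$ appear, and the $\sqrt{j+1}$ arises from Cauchy--Schwarz over the $j+1$ summands.

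\textbf{Main obstacle.} The difficulty lies in the mismatch: the numerator noise is $g$, while the denominator noise is $\tilde g$. In \cite{DefoBottBachUsun2022} both are the same variable, which is used to bound $g^2/(\epsilon+v)$ by quantities summable via Lemma~\ref{lem.adam.u}. Here I expect to keep $g_{k-j,i}^2/(\epsilon+\tilde v_{k-j,i})=\hat u_{k-j,i}^2$ as a separate quantity, which is exactly why $\hat u$ was defined. The $\tilde g$ terms then only need the uniform bound $M$, so no unbiasedness of $\tilde g$ is required.

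Getting the Young weights to produce exactly the constants $\tfrac12$, $\tfrac14$ and $3$ will require tracking the three error pieces separately: the gradient drift, the numerator part of the denominator swap, and the conditional-expectation part. Each contributes a factor of $M$ to the final constant $3M$.
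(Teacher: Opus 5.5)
Your outline follows the paper's proof essentially step for step: the same-time/drift split, the Lipschitz--Cauchy--Schwarz--Young bound on the drift using $\epsilon+\tilde v_{k,i}\ge\beta_2^j(\epsilon+\tilde v_{k-j,i})$, the swap to the $\mathcal{F}_{k-j}$-measurable denominator $\hat v_{k,j+1,i}$ with the swap error bounded as in D\'efossez et al., and the $M+2M=3M$ tally. Like the paper's argument, it yields the weight $(\beta_1/\beta_2)^j$, so the exponent $k$ in the displayed statement should be read as $j$.

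One correction to your commentary on the main obstacle concerns the part of the swap error carrying the realized $S:=\sum_{l=k-j}^{k}\beta_2^{k-l}\tilde g_{l,i}^2$. There the uniform bound on $\tilde g$ alone only gives a coefficient of order $(j+1)M^2/\sqrt{\epsilon+\hat v_{k,j+1,i}}$ on $g_{k-j,i}^2/(\epsilon+\tilde v_{k,i})$, not $M\sqrt{j+1}$. What closes it, as in the cited argument, is the Young split with $x=\lvert\nabla_i f(w_{k-j})\rvert\sqrt{S/\bar S}\,(\epsilon+\hat v_{k,j+1,i})^{-1/4}$, where $\bar S:=\mathbb{E}[S\mid\mathcal{F}_{k-j}]$, so that $\mathbb{E}[x^2\mid\mathcal{F}_{k-j}]$ is exactly the main-term integrand, while the other factor uses only $S\le\epsilon+\tilde v_{k,i}$ and $\bar S\le\min\{\epsilon+\hat v_{k,j+1,i},(j+1)M^2\}$.
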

\begin{proof}
  Consider arbitrary $k \in \mathbb{N}$.  One first finds by adding and subtracting terms that
  \begin{align}
    \nabla f(w_k)^T u_k
    &= \sum_{i\in[d]} \nabla_i f(w_k) \frac{m_{k,i}}{\sqrt{\epsilon + \tilde{v}_{k,i}}} = \sum_{i \in [d]} \sum_{j=0}^{k-1} \beta_1^j \nabla_i f(w_k) \frac{g_{k-j,i}}{\sqrt{\epsilon + \tilde{v}_{k,i}}} \nonumber \\
    &= \underbrace{\sum_{i \in [d]} \sum_{j=0}^{k-1} \beta_1^j \nabla_i f(w_{k-j}) \frac{g_{k-j,i}}{\sqrt{\epsilon + \tilde{v}_{k,i}}}}_A \\
    & \qquad + \underbrace{\sum_{i \in [d]} \sum_{j=0}^{k-1} \beta_1^j (\nabla_i f(w_k) - \nabla_i f(w_{k-j}))\frac{g_{k-j,i}}{\sqrt{\epsilon + \tilde{v}_{k,i}}}}_B. \label{eq.adam.descent.1}
  \end{align}
  Let us first bound $B$.  One finds for all $i \in [d]$ and $j \in [k-1]$ that
  \begin{equation}\label{eq.adam.descent.2}
  \begin{aligned}
    \epsilon + \tilde{v}_{k,i} &= \epsilon + \beta_2^j \tilde{v}_{k-j,i} + \sum_{l=k-j+1}^k \beta_2^{k-l} \tilde{g}_{l,i}^2 \geq \epsilon + \beta_2^j \tilde{v}_{k-j,i} \geq \beta_2^j (\epsilon + \tilde{v}_{k-j,i})  \\
    & \qquad \implies \frac{g^2_{k-j,i}}{\epsilon + \tilde{v}_{k,i}} \leq \frac{1}{\beta_2^j} \hat{u}_{k-j,i}^2.
\end{aligned}
  \end{equation}
  At the same time, with Jensen's inequality and the fact that $\{\alpha_k\}$ is nondecreasing, one has for any $j \in [k-1]$ that
  \begin{align}
    \|\nabla f(w_k) - \nabla f(w_{k-j})\|_2^2 &\leq L^2 \|w_k - w_{k-j}\|_2^2 \nonumber \\
    &= L^2 \left\| \sum_{l=1}^{j} \alpha_{k-l} u_{k-l} \right\|_2^2 \nonumber \\
    &\leq L^2 \left( \sum_{l=1}^{j} \alpha_{k-l}^2 \right) \left( \sum_{l=1}^{j} \|u_{k-l}\|_2^2 \right) \nonumber \\
    &\leq L^2 j \alpha_k^2 \left(\sum_{l = 1}^{j} \|u_{k-l}\|_2^2 \right). \label{eq.adam.descent.3}
  \end{align}
  By \eqref{eq.adam.descent.1}, \eqref{eq.adam.descent.2} and \eqref{eq.adam.descent.3}, and since for any pair of real numbers $(\alpha,\beta) \in \mathbb{R} \times \mathbb{R}$ and positive real number $\lambda \in (0,\infty)$ one has that $|\alpha \beta| \leq \frac{\lambda}{2} |\alpha|^2 + \frac{1}{2\lambda} |\beta|^2$, one finds with $\lambda = \frac{\sqrt{1 - \beta_1}}{2M\sqrt{j+1}}$ that
  \begin{align*}
    |B|
    &= \left| \sum_{i \in [d]} \sum_{j=0}^{k-1} \beta_1^j (\nabla_i f(w_k) - \nabla_i f(w_{k-j})) \frac{g_{k-j,i}}{\sqrt{\epsilon + \tilde{v}_{k,i}}} \right| \\
    &\leq \sum_{i \in [d]} \sum_{j=0}^{k-1} \beta_1^j | \nabla_i f(w_k) - \nabla_i f(w_{k-j}) | \left|\frac{g_{k-j,i}}{\sqrt{\epsilon + \tilde{v}_{k,i}}} \right| \\
    &\leq \sum_{i \in [d]} \sum_{j=0}^{k-1} \beta_1^j \left( \frac{\sqrt{1 - \beta_1}}{4M \sqrt{j+1}} \left( \nabla_i f(w_k) - \nabla_i f(w_{k-j}) \right)^2 + \frac{M \sqrt{j+1}}{\sqrt{1-\beta_1}} \frac{g^2_{k-j,i}}{\epsilon + \tilde{v}_{k,i}} \right) \\
    &\leq \sum_{j=0}^{k-1} \beta_1^j \frac{\sqrt{1 - \beta_1}}{4M \sqrt{j+1}} \| \nabla f(w_k) - \nabla f(w_{k-j}) \|_2^2 + \frac{M}{\sqrt{1 - \beta_1}} \sum_{j=0}^{k-1} \frac{\sqrt{j+1} \beta_1^j}{\beta_2^j} \|\hat{u}_{k-j}\|_2^2 \\
    &\leq \frac{\sqrt{1 - \beta_1}}{4M} L^2 \alpha_k^2 \left( \sum_{j=0}^{k-1} \beta_1^j \frac{j}{\sqrt{j+1}} \sum_{l=1}^j \|u_{k-l}\|_2^2 \right) + \frac{M}{\sqrt{1 - \beta_1}}  \sum_{j=0}^{k-1} \left( \frac{\beta_1}{\beta_2} \right)^j \sqrt{j+1} \|\hat{u}_{k-j}\|_2^2 \\
    &\leq \frac{\sqrt{1 - \beta_1}}{4M} L^2  \alpha_k^2 \left( \sum_{l=1}^{k-1} \|u_{k-l}\|_2^2 \sum_{j=l}^{k-1} \beta_1^j \sqrt{j}  \right) + \frac{M}{\sqrt{1 - \beta_1}}  \sum_{j=0}^{k-1} \left( \frac{\beta_1}{\beta_2} \right)^j \sqrt{j+1} \|\hat{u}_{k-j}\|_2^2.
  \end{align*}
  Next, let us bound $A$ in \eqref{eq.adam.descent.1}.  For all $i \in [d]$ and $j \in [k-1]$, one finds that
  \begin{equation*}
    \frac{\nabla_i f(w_{k-j}) g_{k-j,i}}{\sqrt{\epsilon + \tilde{v}_{k,i}}} = \underbrace{\frac{\nabla_i f(w_{k-j}) g_{k-j,i}}{\sqrt{\epsilon + \hat{v}_{k,j+1,i}}}}_{A_1} + \underbrace{\nabla_i f(w_{k-j}) g_{k-j,i} \left( \frac{1}{\sqrt{\epsilon + \tilde{v}_{k,i}}} - \frac{1}{\sqrt{\epsilon + \hat{v}_{k,j+1,i}}}   \right)}_{C}.
  \end{equation*}
  Taking expectation of $A_1$, one finds that
  \begin{equation*}
    \mathbb{E}\left[\frac{\nabla_i f(w_{k-j}) g_{k-j,i}}{\sqrt{\epsilon + \hat{v}_{k,j+1,i}}} \right] = \mathbb{E}\left[ \mathbb{E}_{k-j-1} \left[ \frac{\nabla_i f(w_{k-j}) g_{k-j,i}}{\sqrt{\epsilon + \hat{v}_{k,j+1,i}}} \right]\right] =  \mathbb{E}\left[\frac{\nabla_i f(w_{k-j})^2}{\sqrt{\epsilon + \hat{v}_{k,j+1,i}}} \right].
  \end{equation*}
  As for $C$, an upper bound for the expected value of $|C|$ can be established.  The details are tedious, so for our purposes here let us simply refer to pages 19--20 of \cite{DefoBottBachUsun2022} and say that the result is
  \begin{align*}
    \mathbb{E}[|C|] \leq \frac{1}{2} \mathbb{E} \left[\frac{\nabla_i f(w_{k-j})^2}{\sqrt{\epsilon + \hat{v}_{k,j+1,i}}}\right] + \frac{2M}{\sqrt{1 - \beta_1} \beta_2^j} \sqrt{j+1} \mathbb{E}\left[\frac{g_{k-j,i}^2}{\epsilon + \vtilde_{k-j,i}}\right].
  \end{align*}
  Hence, one obtains that 
  \begin{align*}
    A
    &= \sum_{i \in [d]} \sum_{j=0}^{k-1} \beta_1^j \left( \frac{\nabla_i f(w_{k-j}) g_{k-j,i}}{\sqrt{\epsilon + \hat{v}_{k,j+1,i}}} + \nabla_i f(w_{k-j}) g_{k-j,i} \left( \frac{1}{\sqrt{\epsilon + \tilde{v}_{k,i}}} - \frac{1}{\sqrt{\epsilon + \hat{v}_{k,j+1,i}}} \right)\right) \\
    &\geq \sum_{i \in [d]} \sum_{j=0}^{k-1} \beta_1^j \left( \frac{\nabla_i f(w_{k-j}) g_{k-j,i}}{\sqrt{\epsilon + \hat{v}_{k,j+1,i}}} - \left|\nabla_i f(w_{k-j}) g_{k-j,i} \left( \frac{1}{\sqrt{\epsilon + \tilde{v}_{k,i}}} - \frac{1}{\sqrt{\epsilon + \hat{v}_{k,j+1,i}}} \right) \right| \right) \\
  \end{align*}
  satisfies the lower bound in expectation given by
  \begin{align*}
    \mathbb{E}[A]
    &\geq \sum_{i \in [d]} \sum_{j=0}^{k-1} \beta_1^j \left( \mathbb{E} \left[ \frac{\nabla_i f(w_{k-j})^2}{\sqrt{\epsilon + \hat{v}_{k,j+1,i}}} \right] - \frac{1}{2} \mathbb{E} \left[ \frac{\nabla_i f(w_{k-j})^2}{\sqrt{\epsilon + \hat{v}_{k,j+1,i}}} \right] - \frac{2M}{\sqrt{1 - \beta_1} \beta_2^j} \sqrt{j+1} \mathbb{E} \left[\frac{g_{k-j,i}^2}{\epsilon + \tilde{v}_{k-j,i}} \right] \right) \\
    &= \frac{1}{2} \sum_{i \in [d]} \sum_{j=0}^{k-1} \beta_1^j \mathbb{E} \left[ \frac{\nabla_i f(w_{k-j})^2}{\sqrt{\epsilon + \hat{v}_{k,j+1,i}}} \right] - \frac{2M}{\sqrt{1 - \beta_1}}   \sum_{j=0}^{k-1} \left( \frac{\beta_1}{\beta_2} \right)^j \sqrt{j+1} \mathbb{E} \left[\|\hat{u}_{k-j}\|_2^2 \right].
  \end{align*}
  Thus,
  \begin{align*}
    \mathbb{E} \left[\nabla f(w_k)^T u_k \right] \geq&\ \mathbb{E}[A] - \mathbb{E}[|B|] \\
    \geq&\ \frac{1}{2} \sum_{i \in [d]} \sum_{j=0}^{k-1} \beta_1^j \mathbb{E} \left[ \frac{\nabla_i f(w_{k-j})^2}{\sqrt{\epsilon + \hat{v}_{k,j+1,i}}} \right] - \frac{2M}{\sqrt{1-\beta_1}} \sum_{j=0}^{k-1} \left( \frac{\beta_1}{\beta_2} \right)^j \sqrt{j+1} \mathbb{E}\left[\|\hat{u}_{k-j}\|_2^2 \right] \\
    &\ - \mathbb{E} \left[ \frac{\sqrt{1 - \beta_1}}{4M} L^2 \alpha_k^2  \left( \sum_{l=1}^{k-1} \|u_{k-l}\|_2^2 \sum_{j=l}^{k-1} \beta_1^j \sqrt{j} \right) \right] \\
    & - \mathbb{E} \left[ \frac{M}{\sqrt{1 - \beta_1}} \sum_{j=0}^{k-1} \left( \frac{\beta_1}{\beta_2} \right)^j \sqrt{j+1} \|\hat{u}_{k-j}\|_2^2 \right],
  \end{align*}
  which completes the proof.
\end{proof}

The only other thing that we need for the proof of the theorem is the following lemma about series.

\begin{lemma}\label{lem:adam.geometric.sequence}
  For any $k \in \mathbb{N}$ and $\beta \in (0,1)$, it follows that
  \begin{equation*}
    \sum_{j=0}^{k-1} \beta^j \sqrt{j+1} \leq \frac{2}{(1-\beta)^{3/2}}\ \ \text{and}\ \ \sum_{j=0}^{k-1} \beta^j \sqrt{j}(j+1) \leq \frac{4\beta}{(1-\beta)^{5/2}},
  \end{equation*}
\end{lemma}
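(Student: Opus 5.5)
The plan is to bound each finite sum by the corresponding infinite series, since all terms are nonnegative, and then control each infinite series with the Cauchy--Schwarz inequality. The split is chosen so that each factor becomes a standard closed-form power series. The only identities I need, for $\beta \in (0,1)$, are
\begin{equation*}
  \sum_{i=0}^\infty \beta^i = \frac{1}{1-\beta},\quad
  \sum_{i=0}^\infty (i+1)\beta^i = \frac{1}{(1-\beta)^2},\quad
  \sum_{i=0}^\infty (i+1)^2\beta^i = \frac{1+\beta}{(1-\beta)^3}.
\end{equation*}
Each follows by differentiating the geometric series once or twice.

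For the first inequality, I would write $\beta^j\sqrt{j+1} = \beta^{j/2}\cdot\beta^{j/2}\sqrt{j+1}$ and apply Cauchy--Schwarz:
\begin{equation*}
  \sum_{j=0}^{k-1}\beta^j\sqrt{j+1} \le \Big(\sum_{j=0}^\infty \beta^j\Big)^{1/2}\Big(\sum_{j=0}^\infty (j+1)\beta^j\Big)^{1/2} = \frac{1}{(1-\beta)^{1/2}}\cdot\frac{1}{1-\beta} = \frac{1}{(1-\beta)^{3/2}}.
\end{equation*}
This is already at most $\frac{2}{(1-\beta)^{3/2}}$.

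For the second inequality, the $j=0$ term vanishes because of the factor $\sqrt{j}$. I would therefore shift the index $j = i+1$ and pull out one factor of $\beta$, which is what produces the $\beta$ in the numerator:
\begin{equation*}
  \sum_{j=0}^{k-1}\beta^j\sqrt{j}(j+1) \le \beta\sum_{i=0}^\infty \beta^i\sqrt{i+1}\,(i+2).
\end{equation*}
Next I apply Cauchy--Schwarz with the split $\beta^{i/2}\sqrt{i+1}$ times $\beta^{i/2}(i+2)$. This bounds the sum by $\big(\sum_i (i+1)\beta^i\big)^{1/2}\big(\sum_i (i+2)^2\beta^i\big)^{1/2}$. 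Using $(i+2)^2 \le 4(i+1)^2$ and $1+\beta\le 2$, the second factor is at most $\big(8/(1-\beta)^3\big)^{1/2}$. The whole expression is then at most
\begin{equation*}
  \beta\cdot\frac{1}{1-\beta}\cdot\frac{2\sqrt{2}}{(1-\beta)^{3/2}} = \frac{2\sqrt2\,\beta}{(1-\beta)^{5/2}} \le \frac{4\beta}{(1-\beta)^{5/2}}.
\end{equation*}

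The main obstacle is choosing the split in the second bound so that a full factor of $\beta$ survives. A direct Cauchy--Schwarz pairing of $\sqrt{j}$ with $j+1$ gives only $\sqrt{\beta(1+\beta)}/(1-\beta)^{5/2}$. That is not bounded by $4\beta/(1-\beta)^{5/2}$ when $\beta$ is small, since $\sqrt{2\beta} \le 4\beta$ fails for $\beta < 1/8$. Shifting the index before applying Cauchy--Schwarz resolves this.
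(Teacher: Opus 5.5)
Your proof is correct, but it takes a different route from the paper, which simply defers to Lemmas A.3 and A.4 of D\'efossez et al. Those lemmas are proved by summation by parts. For $A_k=\sum_{j=0}^{k-1}\beta^j\sqrt{j+1}$ one telescopes to get $(1-\beta)A_k\le 1+\sum_{j\ge 1}\beta^j(\sqrt{j+1}-\sqrt{j})\le 1+\sum_{j\ge1}\beta^j/(2\sqrt{j})$. The last sum is compared with the integral $\int_0^\infty \beta^x/(2\sqrt{x})\,dx=\sqrt{\pi}/(2\sqrt{-\log\beta})$, and the result is loosened via $-\log\beta\ge 1-\beta$. The second bound is then reduced to the first by the same telescoping, using $\sqrt{j}(j+1)-\sqrt{j-1}\,j\le 2\sqrt{j}$ and the index shift $\sum_{j\ge1}\beta^j\sqrt{j}=\beta\sum_{i\ge0}\beta^i\sqrt{i+1}$. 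As in your argument, that shift is what produces the factor $\beta$.

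Your Cauchy--Schwarz route replaces the telescoping and the Gaussian integral with three closed-form power series. It proves the two inequalities independently and gives sharper constants ($1$ and $2\sqrt{2}$ instead of $2$ and $4$), which is more than Theorem~\ref{th.lehi} needs. The cited route instead yields the intermediate bound $\frac{1}{1-\beta}\bigl(1+\frac{\sqrt{\pi}}{2\sqrt{-\log\beta}}\bigr)$. Its leading constant $\sqrt{\pi}/2=\Gamma(3/2)$ is asymptotically exact as $\beta\to1$, and the telescoping technique adapts readily to $\sum_j\beta^j\phi(j)$ for other slowly growing $\phi$.

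One small quibble with your closing aside. The direct pairing gives $\sqrt{\beta(1+\beta)}$, which exceeds $4\beta$ precisely when $\beta<1/15$. Replacing $1+\beta$ by the upper bound $2$ cannot by itself show that the comparison fails. Your conclusion that this pairing is too weak for small $\beta$ is nonetheless right.
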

\begin{proof}
  Please refer to Lemmas~A.3 and A.4 from \cite{DefoBottBachUsun2022}.
\end{proof}

Now we are ready to prove the theorem.

\begin{proof}[Proof of Theorem \ref{th.lehi}]
  By Lipschitz continuity of the gradient of $f$, one has \cite{BottCurtNoce2018} that
  \begin{equation}\label{eq.stars}
    f(w_{k+1}) \leq f(w_k) - \alpha_k \nabla f(w_k)^T u_k + \frac{1}{2} \alpha_k^2 L \|u_k\|_2^2.
  \end{equation}
  Under the conditions of the theorem, one finds that
  \begin{equation*}
    \sqrt{\epsilon + \hat{v}_{k,j+1,i}} \leq \sqrt{\epsilon + \sum_{j=1}^k \beta_2^{k-j} (M^2 - \epsilon)} \leq \sqrt{\sum_{j=1}^k \beta_2^{k-j} (M^2 - \epsilon + \epsilon)} = M \sqrt{\sum_{j=0}^{k-1} \beta_2^j} = M\sqrt{\frac{1 - \beta_2^k}{1-\beta_2}} = \frac{M \alpha_k}{\alpha (1 - \beta_1)}.
  \end{equation*}
  Hence, taking total expectation in \eqref{eq.stars} and using Lemma~\ref{lem.adam.descent}, one finds that
  \begin{align*}
    \mathbb{E}[f(w_{k+1})]
    \leq&\ \mathbb{E}[f(w_k)] - \alpha_k \mathbb{E}\left[\nabla f(w_k)^T u_k\right] + \frac{1}{2} \alpha_k^2L \mathbb{E}[\|u_k\|_2^2] \\
    \leq&\ \mathbb{E}[f(w_k)] - \frac{1}{2} \alpha_k \sum_{i\in[d]} \sum_{j=0}^{k-1} \beta_1^j \mathbb{E}\left[\frac{\nabla_i f(w_{k-j})^2}{\sqrt{\epsilon + \hat{v}_{k,j+1,i}}} \right] + \frac{1}{2} \alpha_k^2 L \mathbb{E}[\|u_k\|_2^2] \\
    &\ + \frac{\alpha_k^3 L^2 \sqrt{1-\beta_1}}{4M} \left( \sum_{l=1}^{k-1} \mathbb{E}[\|u_{k-l}\|_2^2] \sum_{j=l}^{k-1} \beta_1^j \sqrt{j} \right) + \frac{3M\alpha_k}{\sqrt{1-\beta_1}} \left(\sum_{j=0}^{k-1} \left( \frac{\beta_1}{\beta_2} \right)^k \sqrt{j+1} \mathbb{E}[\|\hat{u}_{k-j}\|_2^2] \right) \\
    \leq&\ \mathbb{E}[f(w_k)] - \frac{\alpha (1 - \beta_1)}{2M} \left( \sum_{j=0}^{k-1} \beta_1^j \mathbb{E}[\|\nabla f(w_{k-j})\|_2^2 ]\right) + \frac{1}{2} \alpha_k^2L \mathbb{E}[\|u_k\|_2^2] \\
    &\ + \frac{\alpha_k^3 L^2 \sqrt{1-\beta_1}}{4M} \left( \sum_{l=1}^{k-1} \mathbb{E}[\|u_{k-l}\|_2^2] \sum_{j=l}^{k-1} \beta_1^j \sqrt{j}\right) + \frac{3M\alpha_k}{\sqrt{1 - \beta_1}} \left(\sum_{j=0}^{k-1} \left( \frac{\beta_1}{\beta_2} \right)^k \sqrt{j+1} \mathbb{E}[ \|\hat{u}_{k-j}\|_2^2 ] \right). \\
  \end{align*}
  Summing over $k \in [K]$ and using the fact that $\{\alpha_k\}$ is nondecreasing, one has
  \begin{align*}
    \underbrace{\frac{\alpha (1 - \beta_1)}{2M} \sum_{k\in[K]} \sum_{j=0}^{k-1} \beta_1^j \mathbb{E}\left[\|\nabla f(w_{k-j})\|_2^2 \right]}_A
    \leq&\ f(w_1) - \mathbb{E}[f(w_{K+1})] \\
    &\ + \underbrace{\frac{\alpha_K^2 L}{2} \mathbb{E} \left[ \sum_{k\in[K]} \|u_k\|_2^2 \right]}_B \\ 
    &\ + \underbrace{ \frac{\alpha_K^3 L^2 \sqrt{1 - \beta_1}}{4M} \mathbb{E}\left[ \sum_{k\in[K]} \left( \sum_{l=1}^{k-1} \|u_{k-l}\|_2^2 \sum_{j=l}^{k-1} \beta_1^j \sqrt{j} \right) \right]}_C \\
    &\ + \underbrace{\frac{3M\alpha_K}{\sqrt{1 - \beta_1}} \mathbb{E} \left[ \sum_{k\in[K]} \left( \sum_{j=0}^{k-1} \left( \frac{\beta_1}{\beta_2} \right)^k \sqrt{j+1} \|\hat{u}_{k-j}\|_2^2 \right) \right]}_D.
  \end{align*}
  For the $C$ term, with details on page 22 in \cite{DefoBottBachUsun2022}, one finds with Lemma~\ref{lem:adam.geometric.sequence} that
  \begin{equation*}
    C = \frac{\alpha_K^3 L^2 \sqrt{1-\beta_1}}{4M} \mathbb{E} \left[\sum_{k\in[K]}  \|u_k\|_2^2 \sum_{j=0}^{K-1} \beta_1^j \sqrt{j} (j+1) \right] \leq \frac{\alpha_K^3 L^2}{M} \frac{\beta_1}{(1 - \beta_1)^2} \mathbb{E}\left[ \sum_{k\in[K]} \|u_k\|_2^2 \right].
  \end{equation*}
  For the $D$ term, with details on page 23 in \cite{DefoBottBachUsun2022}, one finds with Lemma~\ref{lem:adam.geometric.sequence} that
  \begin{equation*}
    D = \frac{3 M \alpha_K}{\sqrt{1 - \beta_1}} \mathbb{E}\left[ \sum_{k\in[K]} \|\hat{u}_k\|_2^2 \sum_{j=k}^K \left( \frac{\beta_1}{\beta_2} \right)^{j-k} \sqrt{1+j-k} \right] \leq \frac{6 M \alpha_K}{\sqrt{1 - \beta_1}} \frac{1}{(1 - \frac{\beta_1}{\beta_2})^{3/2}} \mathbb{E} \left[ \sum_{k\in[K]} \|\hat{u}_k\|_2^2 \right].
  \end{equation*}
  Now with respect to $\sum_{k\in[K]} \|u_k\|_2^2$ and $\sum_{k\in[K]} \|\hat{u}_k\|_2^2$, one finds by Lemma~\ref{lem.adam.u} and $\tilde{v}_{K,i} \leq \frac{M^2}{1 - \beta_2}$ that
  \begin{align*}
    \sum_{k\in[K]} \|u_k\|_2^2
    &= \sum_{i \in [d]} \sum_{k\in[K]} u_{k,i}^2 \\
    &\leq \sum_{i \in [d]} \frac{1}{(1 - \beta_1)(1 - \frac{\beta_1}{\beta_2})} \left( \log \left(1 + \frac{\tilde{v}_{K,i}}{\epsilon} \right) - K \log(\beta_2) \right) \\
    &\leq \sum_{i \in [d]} \frac{1}{(1 - \beta_1)(1 - \frac{\beta_1}{\beta_2})} \left( \log \left(1 + \frac{M^2}{(1 - \beta_2) \epsilon} \right) - K \log(\beta_2) \right) \\
  \end{align*}
  and
  \begin{align*}
    \sum_{k\in[K]} \|\hat{u}_k\|_2^2
    &= \sum_{i \in [d]} \sum_{k\in[K]} \hat{u}_{k,i}^2 \\
    &\leq \sum_{i \in [d]} \left( \log \left(1 + \frac{\tilde{v}_{K,i}}{\epsilon} \right) - K \log(\beta_2) \right) \\
    &\leq \sum_{i \in [d]} \left( \log \left(1 + \frac{M^2}{(1 - \beta_2) \epsilon} \right) - K \log (\beta_2) \right).
  \end{align*}
  For the $A$ term, one finds that
  \begin{align*}
    A &= \frac{\alpha (1 - \beta_1)}{2M} \sum_{k\in[K]} \sum_{j=0}^{k-1} \beta_1^j \mathbb{E}[\|\nabla f(w_{k-j})\|_2^2] = \frac{\alpha}{2M} \sum_{k\in[K]} (1 - \beta_1^{K-k+1}) \mathbb{E}[\|\nabla f(w_k)\|_2^2 ].
  \end{align*}
  Thus, with the fact thats that $\alpha_K \leq \alpha \frac{1 - \beta_1}{\sqrt{1 - \beta_2}}$ and $\{\alpha_k\}$ is nondecreasing, the telescoping sum gives
  \begin{align}
    &\ \frac{\alpha}{2M} \sum_{k\in[K]} (1 - \beta_1^{K-k+1}) \mathbb{E}[ \|\nabla f(w_k)\|_2^2 ] \nonumber \\
    \leq&\ f(w_1) - f_{\inf} \nonumber \\
    &\ + \left( \frac{\alpha^2 (1 - \beta_1) L}{2(1 - \beta_2)} + \frac{\alpha^3 L^2 \beta_1}{M (1 - \beta_2)^{3/2}} \right) \frac{d}{(1 - \frac{\beta_1}{\beta_2})} \left( \log \left(1 + \frac{M^2}{(1 - \beta_2) \epsilon} \right) - K \log (\beta_2) \right) \nonumber \\
    &\ + \frac{6 \alpha \sqrt{1 - \beta_1} M}{\sqrt{1 - \beta_2}} \frac{d} {(1 - \frac{\beta_1}{\beta_2})^{3/2}}  \left( \log \left(1 + \frac{M^2}{(1 - \beta_2) \epsilon} \right) - K \log (\beta_2) \right). \label{stargazing}
  \end{align}
  Now one can observe that
  \begin{align*}
    \sum_{k=1}^K (1 - \beta_1^{K-k+1}) = K - \sum_{k=1}^K \beta_1^k \geq K - \frac{\beta_1}{1 - \beta_1} =: \tilde{K}
  \end{align*}
  in order to find from the definition of $R_K$ that
  \begin{align*}
    \mathbb{E}[ \|\nabla f(w_{R_K})\|_2^2 ] = \sum_{k=1}^K \mathbb{P}[R_K = k] \mathbb{E} [ \|\nabla f(w_k) \|_2^2 ]
    &= \frac{ \sum_{k \in [K]} \mathbb{E}[ \|\nabla f(w_k)\|_2^2 ] (1 - \beta_1^{K-k+1})}{\sum_{k\in[K]} (1 - \beta_1^{K-k+1})} \\
    &\leq \frac{ \sum_{k\in[K]} \mathbb{E}[ \|\nabla f(w_k)\|_2^2 ] (1 - \beta_1^{K-k+1})}{\tilde{K}}.
  \end{align*}
  The desired conclusion follows from this inequality along with \eqref{stargazing}.
\end{proof}

\section{Additional Experiment Details}
We implemented and tested our algorithm on the following settings (Table \ref{tab:experiment_summary}).











\begin{table}[t]
\centering
\caption{Experiment summary}
\label{tab:experiment_summary}
\begin{tabular}{lll}
\toprule
\textbf{Neural network structure} & \textbf{Dataset} & \textbf{Loss function} \\
\midrule
MLP (1 hidden layer, 100 nodes) & UCI Protein & MSE \\
MLP (1 hidden layer, 50 nodes) & MNIST & Cross-Entropy \\
ResNet-18 & CIFAR-10 & Cross-Entropy \\
ResNet-18 & CIFAR-100 & Cross-Entropy \\
GPT-2 Small & Penn Treebank & Cross-Entropy \\
Llama-3.2-1B & FineWeb-Edu & Cross-Entropy \\
\bottomrule
\end{tabular}
\end{table}

As an implementation-level reference, Table~\ref{tab:runtime_stats} reports per-step runtime and peak GPU memory from one representative FineWeb-Edu run for each optimizer. Runtime statistics are computed from average step times over 10-step logging windows in one representative run. These numbers are intended to provide a coarse indication rather than a full systems benchmark. The LEHI variants incur additional runtime  while their peak memory usage remains comparable to Adam-type baselines and below that of Sophia-G in this run.

\textbf{The runtime overhead of LEHI and LEHIBRID is mainly due to the current non-optimized PyTorch implementation.} Mathematically, the original and auxiliary gradients share the same model-side Jacobian:
\begin{equation*}
    \nabla_w f = \nabla_w p \, \nabla_p \ell,
    \qquad
    \nabla_w \tilde f = \nabla_w p \, \nabla_p z.
\end{equation*}
Thus, both gradients involve the same derivative of the prediction function $p$ with respect to the parameters $w$ (i.e., $\nabla_w p$), and differ only in the loss-side vector propagated through this Jacobian. In principle, this shared structure suggests that the two gradient computations need not be entirely independent. However, standard PyTorch backpropagation is implemented through vector-Jacobian products rather than by explicitly forming and storing $\nabla_w p$. As a result, changing the vector being backpropagated from $\nabla_p \ell$ to $\nabla_p z$ typically requires a second backward pass through the same computation graph. The timings in Table~\ref{tab:runtime_stats} should therefore be interpreted as reflecting a straightforward PyTorch implementation rather than an optimized fused implementation.

Moreover, similar to amortized second-order strategies such as Sophia-G, one could compute and update the auxiliary-gradient information less frequently, while reusing the resulting scaling information across multiple optimization steps. We expect that such an amortized variant may reduce runtime substantially while retaining much of the benefit of the Hessian-imitation scaling.


\begin{table}[t]
\centering
\small
\caption{Runtime and peak memory profile on FineWeb-Edu}
\label{tab:runtime_stats}
\begin{threeparttable}
\begin{tabular}{lrrr}
\toprule
\textbf{Optimizer} & \textbf{Max 10-step avg.} & \textbf{Min 10-step avg.} & \textbf{Peak memory} \\
 & \textbf{(ms/step)} & \textbf{(ms/step)} & \textbf{(MB)} \\
\midrule
Adam     & 1062.86 & 1053.58 & 53103.1 \\
AdamW    & 1069.20 & 1060.50 & 53103.1 \\
LEHI     & 2256.76\tnote{*} & 2221.09\tnote{*} & 58967.0 \\
LEHIBRID & 1659.60\tnote{*} & 1648.98\tnote{*} & 58967.0 \\
Sophia-G & 1173.18 & 1157.91 & 61119.6 \\
\bottomrule
\end{tabular}
\begin{tablenotes}[flushleft]
\footnotesize
\item[*] Please refer to our comments following the table.
\end{tablenotes}
\end{threeparttable}
\end{table}

\subsection{Additional Experiment Results}
\subsubsection{CIFAR-10} \label{app.sub.cifar10}
CIFAR-10 is a widely used image classification benchmark consisting of $32 \times 32$ color images drawn from $10$ categories \cite{krizhevsky2009learning}. We trained a ResNet-18 network using a minibatch size of $256$ with $72$ epochs. For each optimizer, the best learning rate was selected based on the testing accuracy score across the final $7$ epochs. All other hyperparameters were held fixed: $\beta_1=0.9, \beta_2=0.999$ for all optimizers, $\epsilon=10^{-7}$ for Adam and AdamW, $\epsilon=10^{-2}$ for LEHI and LEHIBRID, and \texttt{weight decay}$=10^{-2}$.

The accuracy trajectories for the optimizers are shown in Figure~\ref{fig:cifar10_acc}. For testing accuracies, while initially all optimizers behaved similarly, the zoomed-in subplot from epoch $60$ to $72$ highlights that LEHI (green) and LEHIBRID (purple) maintained a consistent lead in the final epochs compared to Adam (red) and AdamW (blue). In contrast, the learning trajectories for ADAM and ADAMW showed a relative deceleration after 50 epochs, resulting in a relatively lower final testing accuracy.

\begin{figure}[t]
  \centering
  \begin{tabular}{cc}
    \includegraphics[width=0.35\textwidth]{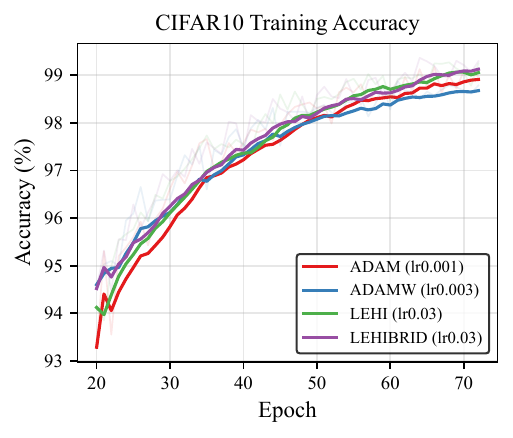} &
    \includegraphics[width=0.35\textwidth]{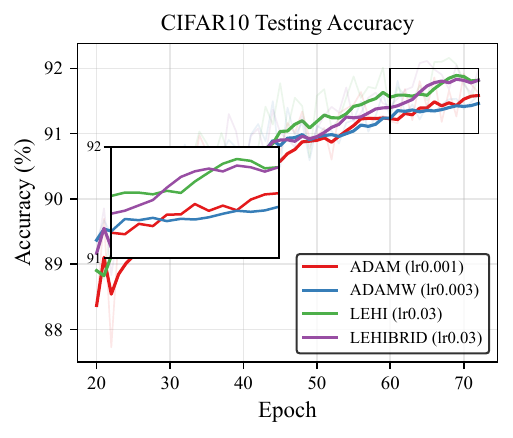} \\
    (a) Training accuracy & (b) Testing accuracy
  \end{tabular}
  \caption{\textbf{CIFAR10} accuracy curves. Learning rate was selected from $\{10^{-1}, 3\times10^{-2},10^{-2}, 3\times10^{-3}, 10^{-3}, 3 \times 10^{-4}, 10^{-4}\}$ based on the best testing performance. EMA smoothing ($\alpha=0.2$) is applied. First $20$ epochs are omitted to highlight differences. Faded lines show raw curves.}
  \label{fig:cifar10_acc}
\end{figure}



\subsection{Experimental Setup}
Our experiments except the FineWeb-Edu were conducted on the Google Colab platform utilizing L4 GPUs. The FineWeb-Edu tasks were conducted on the Google Colab platform utilizing A100 GPUs. For the regression task on UCI Protein, we implemented a randomized 80/20 train-test split and standardized both features and targets using training set statistics. Image classification benchmarks followed official dataset partitions: MNIST used the standard 60,000/10,000 split with single-channel normalization, while CIFAR-10 and CIFAR-100 utilized the official 50,000/10,000 splits. For the CIFAR experiments, training data was preprocessed with random cropping (4-pixel padding) and horizontal flipping, followed by dataset-specific normalization. For language modeling on Penn Treebank, we used the official pre-split text files (train, validation, and test) and the GPT2TokenizerFast to tokenize data into sequences with a block size of 128 tokens, where targets were shifted by one token for autoregressive prediction. For the FineWeb-Edu experiments, we used a subset containing $2$M training tokens and $200$K testing tokens. The subset was used to make the Llama-3.2-1B from-scratch training experiment computationally tractable while preserving the next-token prediction structure of the full FineWeb-Edu benchmark.

\subsection{Learning Rate Tuning and Statistics} \label{app.sub.tuning}
We determined the optimal learning rate for each optimizer and task via a log-scale grid search over a comprehensive range of values. This initial tuning phase was conducted using a fixed random seed ($0$). Based on these search results, we identified the best-performing learning rate candidates for each configuration. To ensure statistical significance, we subsequently evaluated these candidates using two additional random seeds ($1$ and $2$), the results of which are reported in Section \ref{sec.numerical}. 

For the Penn Treebank task, we followed the standard three-split protocol (train/validation/test), using the validation set for hyperparameter selection and reporting final results on the held-out testing set. For the FineWeb-Edu task and smaller-scale benchmarks (MNIST, UCI, CIFAR), we followed the common experimental setup where hyperparameters are selected based on final test split performance. To ensure a fair comparison, we apply an identical tuning setting to all evaluated optimizers; thus, while the absolute performance may reflect testing-set tuning, the relative improvements between methods remain valid.

In this subsection, we include the results we obtained from the first-round learning rate search.

\subsubsection{UCI Protein}
For UCI Protein, we did a log-scale grid search over the set $\{1 \times 10^{-1}, 3 \times 10^{-2}, 1 \times 10^{-2}, 3 \times 10^{-3}, 1 \times 10^{-3}, 3 \times 10^{-4}, 1 \times 10^{-4}\}$. Table \ref{tab:protein_lr_loss} reports the mean testing loss and the associated stability metric ($2 \times$ standard deviation) calculated over the final $10$ epochs of training. The final risk-adjusted selection scores are reported in Table \ref{tab:protein_loss_selection}. We selected learning rates $\{1 \times 10^{-1}, 3 \times 10^{-2}, 1 \times 10^{-2},3 \times 10^{-3}\}$ for LEHI and LEHIBRID, and $\{1 \times 10^{-2}, 3 \times 10^{-3}, 1 \times 10^{-3}, 3 \times 10^{-3}\}$ for Adam and AdamW to conduct further evaluations across two additional random seeds ($1$ and $2$).

\begin{table}[h!]
\centering
\small
\caption{UCI Protein mean testing loss $\pm \;2 \times \text{standard deviation}$ across last $10$ epochs}
\label{tab:protein_lr_loss}
\begin{tabular}{lcccc}
\toprule
\textbf{LR} & \textbf{LEHI} & \textbf{LEHIBRID} & \textbf{ADAM} & \textbf{ADAMW} \\
\midrule
$1 \times 10^{-4}$ & $0.3500 \pm 0.0009$ & $0.3402 \pm 0.0008$ & $0.2727 \pm 0.0007$ & $0.2738 \pm 0.0007$ \\
$3 \times 10^{-4}$ & $0.3251 \pm 0.0005$ & $0.3189 \pm 0.0006$ & $0.2582 \pm 0.0016$ & $0.2599 \pm 0.0015$ \\
$1 \times 10^{-3}$ & $0.3028 \pm 0.0006$ & $0.2965 \pm 0.0006$ & $\mathbf{0.2475 \pm 0.0058}$ & $\mathbf{0.2531 \pm 0.0058}$ \\
$3 \times 10^{-3}$ & $0.2818 \pm 0.0006$ & $0.2752 \pm 0.0006$ & $0.2482 \pm 0.0101$ & $0.2580 \pm 0.0191$ \\
$1 \times 10^{-2}$ & $0.2620 \pm 0.0031$ & $0.2596 \pm 0.0034$ & $0.2583 \pm 0.0123$ & $0.2757 \pm 0.0084$ \\
$3 \times 10^{-2}$ & $0.2535 \pm 0.0070$ & $0.2480 \pm 0.0072$ & $0.2894 \pm 0.0133$ & $0.3038 \pm 0.0139$ \\
$1 \times 10^{-1}$ & $\mathbf{0.2426 \pm 0.0089}$ & $\mathbf{0.2407 \pm 0.0097}$ & $0.3449 \pm 0.0172$ & $0.3594 \pm 0.0215$ \\
\bottomrule
\end{tabular}
\end{table}

\begin{table}[h!]
\centering
\caption{UCI Protein testing loss risk-adjusted selection scores ($c=2.0$)}
\label{tab:protein_loss_selection}
\begin{tabular}{lcccc}
\toprule
\textbf{LR} & \textbf{LEHI} & \textbf{LEHIBRID} & \textbf{ADAM} & \textbf{ADAMW} \\
\midrule
$1 \times 10^{-4}$ & 0.3510 & 0.3410 & 0.2734 & 0.2745 \\
$3 \times 10^{-4}$ & 0.3256 & 0.3195 & 0.2598 & 0.2614 \\
$1 \times 10^{-3}$ & 0.3033 & 0.2971 & \textbf{0.2534} & \textbf{0.2589} \\
$3 \times 10^{-3}$ & 0.2824 & 0.2759 & 0.2584 & 0.2770 \\
$1 \times 10^{-2}$ & 0.2651 & 0.2630 & 0.2706 & 0.2841 \\
$3 \times 10^{-2}$ & 0.2605 & 0.2553 & 0.3027 & 0.3177 \\
$1 \times 10^{-1}$ & \textbf{0.2515} & \textbf{0.2504} & 0.3621 & 0.3810 \\
\bottomrule
\end{tabular}
\end{table}

\subsubsection{MNIST}
For MNIST, we did a log-scale grid search over the set $\{1 \times 10^{-1}, 3 \times 10^{-2}, 1 \times 10^{-2}, 3 \times 10^{-3}, 1 \times 10^{-3}, 3 \times 10^{-4}, 1 \times 10^{-4}\}$. 
Table \ref{tab:mnist_lr_acc} reports the mean testing accuracy and the associated stability metric ($2 \times$ standard deviation) calculated over the final $3$ epochs of training. The final risk-adjusted selection scores are reported in Table \ref{tab:mnist_acc_selection}. We selected learning rates $\{1 \times 10^{-2}, 3 \times 10^{-3}, 1 \times 10^{-3},3 \times 10^{-4}\}$ for LEHI and LEHIBRID, and $\{3 \times 10^{-3}, 1 \times 10^{-3}, 3 \times 10^{-4}, 1 \times 10^{-4}\}$ for Adam and AdamW to conduct further evaluations across two additional random seeds ($1$ and $2$).

\begin{table}[h!]
\centering
\small
\caption{MNIST mean testing accuracy $\pm \; 2 \times \text{standard deviation}$ across last $3$ epochs}
\label{tab:mnist_lr_acc}
\begin{tabular}{lcccc}
\toprule
\textbf{LR} & \textbf{LEHI (\%)} & \textbf{LEHIBRID (\%)} & \textbf{ADAM (\%)} & \textbf{ADAMW (\%)} \\
\midrule
$1 \times 10^{-4}$ & $91.007 \pm 0.186$ & $91.763 \pm 0.114$ & $96.203 \pm 0.160$ & $96.213 \pm 0.180$ \\
$3 \times 10^{-4}$ & $93.847 \pm 0.167$ & $94.593 \pm 0.121$ & $\mathbf{97.347 \pm 0.160}$ & $\mathbf{97.250 \pm 0.183}$ \\
$1 \times 10^{-3}$ & $96.423 \pm 0.155$ & $96.867 \pm 0.090$ & $97.127 \pm 0.301$ & $97.203 \pm 0.375$ \\
$3 \times 10^{-3}$ & $\mathbf{97.293 \pm 0.115}$ & $\mathbf{97.283 \pm 0.163}$ & $96.933 \pm 0.566$ & $96.950 \pm 0.242$ \\
$1 \times 10^{-2}$ & $96.240 \pm 0.481$ & $96.260 \pm 0.180$ & $96.177 \pm 0.691$ & $95.983 \pm 0.266$ \\
$3 \times 10^{-2}$ & $93.920 \pm 0.883$ & $94.343 \pm 0.492$ & $91.450 \pm 1.753$ & $92.450 \pm 1.280$ \\
$1 \times 10^{-1}$ & $88.983 \pm 0.464$ & $82.040 \pm 2.560$ & $31.287 \pm 7.072$ & $10.023 \pm 2.461$ \\
\bottomrule
\end{tabular}
\end{table}

\begin{table}[h!]
\centering
\caption{MNIST testing accuracy risk-adjusted selection scores ($c=2.0$)}
\label{tab:mnist_acc_selection}
\begin{tabular}{lcccc}
\toprule
\textbf{LR} & \textbf{LEHI (\%)} & \textbf{LEHIBRID (\%)} & \textbf{ADAM (\%)} & \textbf{ADAMW (\%)} \\
\midrule
$1 \times 10^{-4}$ & 90.821 & 91.650 & 96.043 & 96.033 \\
$3 \times 10^{-4}$ & 93.680 & 94.473 & \textbf{97.186} & \textbf{97.067} \\
$1 \times 10^{-3}$ & 96.268 & 96.776 & 96.826 & 96.828 \\
$3 \times 10^{-3}$ & \textbf{97.178} & \textbf{97.120} & 96.367 & 96.708 \\
$1 \times 10^{-2}$ & 95.759 & 96.080 & 95.486 & 95.717 \\
$3 \times 10^{-2}$ & 93.037 & 93.852 & 89.697 & 91.170 \\
$1 \times 10^{-1}$ & 88.520 & 79.480 & 24.215 & 7.563 \\
\bottomrule
\end{tabular}
\end{table}

\subsubsection{CIFAR-10}
For CIFAR-10, we did a log-scale grid search over the set $\{1 \times 10^{-1}, 3 \times 10^{-2}, 1 \times 10^{-2}, 3 \times 10^{-3}, 1 \times 10^{-3}, 3 \times 10^{-4}, 1 \times 10^{-4}\}$. 
Table \ref{tab:cifar10_lr_acc} reports the mean testing accuracy and the associated stability metric ($2 \times$ standard deviation) calculated over the final $10$ epochs of training. The final risk-adjusted selection scores are reported in Table \ref{tab:cifar10_acc_selection}. We selected learning rates $\{3 \times 10^{-2}, 1 \times 10^{-2}\}$ for LEHI and LEHIBRID, and $\{3 \times 10^{-3}, 1 \times 10^{-3}\}$ for Adam and AdamW to conduct further evaluations across two additional random seeds ($1$ and $2$).

\begin{table}[h!]
\centering
\small
\caption{CIFAR-10 mean testing accuracy $\pm \; 2 \times \text{standard deviation}$ across last $8$ epochs}
\label{tab:cifar10_lr_acc}
\begin{tabular}{lcccc}
\toprule
\textbf{LR} & \textbf{LEHI (\%)} & \textbf{LEHIBRID (\%)} & \textbf{ADAM (\%)} & \textbf{ADAMW (\%)} \\
\midrule
$1 \times 10^{-4}$ & $74.804 \pm 0.904$ & $77.585 \pm 0.857$ & $87.139 \pm 0.965$ & $87.086 \pm 0.805$ \\
$3 \times 10^{-4}$ & $82.219 \pm 0.913$ & $83.539 \pm 1.031$ & $90.599 \pm 1.056$ & $90.219 \pm 0.764$ \\
$1 \times 10^{-3}$ & $86.161 \pm 2.029$ & $88.310 \pm 1.178$ & $91.455 \pm 0.999$ & $\mathbf{91.754 \pm 0.717}$ \\
$3 \times 10^{-3}$ & $90.336 \pm 0.823$ & $90.554 \pm 1.472$ & $\mathbf{91.653 \pm 0.775}$ & $91.445 \pm 0.491$ \\
$1 \times 10^{-2}$ & $91.736 \pm 0.695$ & $91.713 \pm 0.622$ & $91.283 \pm 0.637$ & $90.118 \pm 1.579$ \\
$3 \times 10^{-2}$ & $\mathbf{92.080 \pm 0.742}$ & $\mathbf{91.771 \pm 0.630}$ & $90.319 \pm 0.950$ & $87.693 \pm 1.380$ \\
$1 \times 10^{-1}$ & $91.468 \pm 1.325$ & $91.403 \pm 0.976$ & $89.610 \pm 1.784$ & $74.416 \pm 4.620$ \\
\bottomrule
\end{tabular}
\end{table}

\begin{table}[h!]
\centering
\caption{CIFAR-10 testing accuracy risk-adjusted selection scores ($c=2.0$)}
\label{tab:cifar10_acc_selection}
\begin{tabular}{lcccc}
\toprule
\textbf{LR} & \textbf{LEHI (\%)} & \textbf{LEHIBRID (\%)} & \textbf{ADAM (\%)} & \textbf{ADAMW (\%)} \\
\midrule
$1 \times 10^{-4}$ & 73.900 & 76.728 & 86.174 & 86.282 \\
$3 \times 10^{-4}$ & 81.306 & 82.508 & 89.543 & 89.455 \\
$1 \times 10^{-3}$ & 84.132 & 87.132 & 90.456 & \textbf{91.036} \\
$3 \times 10^{-3}$ & 89.514 & 89.082 & \textbf{90.877} & 90.954 \\
$1 \times 10^{-2}$ & 91.041 & 91.091 & 90.646 & 88.538 \\
$3 \times 10^{-2}$ & \textbf{91.338} & \textbf{91.141} & 89.368 & 86.312 \\
$1 \times 10^{-1}$ & 90.142 & 90.426 & 87.826 & 69.796 \\
\bottomrule
\end{tabular}
\end{table}

\subsubsection{CIFAR-100}
For CIFAR-100, we did a log-scale grid search over the set $\{1 \times 10^{-1}, 3 \times 10^{-2}, 1 \times 10^{-2}, 3 \times 10^{-3}, 1 \times 10^{-3}, 3 \times 10^{-4}, 1 \times 10^{-4}\}$. 
Table \ref{tab:cifar100_lr_acc} reports the mean testing accuracy and the associated stability metric ($2 \times$ standard deviation) calculated over the final $10$ epochs of training. The final risk-adjusted selection scores are reported in Table \ref{tab:cifar100_acc_selection}. We selected learning rates $\{3 \times 10^{-2}, 1 \times 10^{-2}\}$ for LEHI and LEHIBRID, and $\{3 \times 10^{-3}, 1 \times 10^{-3}, 3 \times 10^{-4}\}$ for Adam and AdamW to conduct further evaluations across two additional random seeds ($1$ and $2$).

\begin{table}[h!]
\centering
\small
\caption{CIFAR-100 mean testing accuracy $\pm \; 2 \times \text{standard deviation}$ across last $10$ epochs}
\label{tab:cifar100_lr_acc}
\begin{tabular}{lcccc}
\toprule
\textbf{LR} & \textbf{LEHI (\%)} & \textbf{LEHIBRID (\%)} & \textbf{ADAM (\%)} & \textbf{ADAMW (\%)} \\
\midrule
$1 \times 10^{-4}$ & $40.263 \pm 0.536$ & $43.702 \pm 0.438$ & $61.639 \pm 0.718$ & $62.051 \pm 0.946$ \\
$3 \times 10^{-4}$ & $51.005 \pm 0.345$ & $52.505 \pm 0.420$ & $67.538 \pm 1.290$ & $67.681 \pm 1.183$ \\
$1 \times 10^{-3}$ & $55.182 \pm 1.179$ & $56.809 \pm 0.864$ & $\mathbf{68.850 \pm 1.152}$ & $\mathbf{69.136 \pm 1.116}$ \\
$3 \times 10^{-3}$ & $62.225 \pm 1.654$ & $64.427 \pm 1.180$ & $67.865 \pm 0.904$ & $67.153 \pm 1.256$ \\
$1 \times 10^{-2}$ & $68.124 \pm 1.240$ & $\mathbf{69.507 \pm 0.557}$ & $64.966 \pm 0.710$ & $64.910 \pm 1.406$ \\
$3 \times 10^{-2}$ & $\mathbf{69.204 \pm 0.755}$ & $69.489 \pm 1.294$ & $64.488 \pm 0.827$ & $61.671 \pm 2.648$ \\
$1 \times 10^{-1}$ & $67.045 \pm 1.192$ & $66.795 \pm 1.076$ & $59.628 \pm 0.991$ & $38.621 \pm 6.785$ \\
\bottomrule
\end{tabular}
\end{table}

\begin{table}[h!]
\centering
\caption{CIFAR-100 testing accuracy risk-adjusted selection scores ($c=2.0$)}
\label{tab:cifar100_acc_selection}
\begin{tabular}{lcccc}
\toprule
\textbf{LR} & \textbf{LEHI} & \textbf{LEHIBRID} & \textbf{ADAM} & \textbf{ADAMW} \\
\midrule
$1 \times 10^{-4}$ & 39.727 & 43.264 & 60.921 & 61.105 \\
$3 \times 10^{-4}$ & 50.660 & 52.085 & 66.248 & 66.498 \\
$1 \times 10^{-3}$ & 54.003 & 55.945 & \textbf{67.698} & \textbf{68.020} \\
$3 \times 10^{-3}$ & 60.571 & 63.247 & 66.961 & 65.897 \\
$1 \times 10^{-2}$ & 66.884 & \textbf{68.950} & 64.256 & 63.504 \\
$3 \times 10^{-2}$ & \textbf{68.449} & 68.195 & 63.661 & 59.023 \\
$1 \times 10^{-1}$ & 65.853 & 65.719 & 58.637 & 31.836 \\
\bottomrule
\end{tabular}
\end{table}

\subsubsection{Penn Treebank}
For Penn Treebank, we did a log-scale grid search over the set $\{1 \times 10^{-1}, 3 \times 10^{-2}, 1 \times 10^{-2}, 3 \times 10^{-3}, 1 \times 10^{-3}, 3 \times 10^{-4}, 1 \times 10^{-4}, 3 \times 10^{-5}, 1 \times 10^{-5}\}$.
Table \ref{tab:ptb_lr_loss} reports the mean validation loss (log PPL) and the associated stability metric ($2 \times$ standard deviation) calculated over the final $3$ epochs of training. The final risk-adjusted selection scores are reported in Table \ref{tab:ptb_loss_selection}. We ran the whole set of learning rates for two additional random seeds ($1$ and $2$) to compare the learning rate sensitivity between LEHI, LEHIBRID, Adam, and AdamW. Eventually, after we selected the learning rates for each optimizer from the validation results, we evaluated on the testing set with random seeds $\{0,1,2\}$ as well.

\begin{table}[h!]
\centering
\footnotesize
\setlength{\tabcolsep}{3.0pt}
\renewcommand{\arraystretch}{1}
\caption{PTB mean validation loss (log perplexity) $\pm \; 2 \times \text{standard deviation}$ across last $3$ epochs}
\label{tab:ptb_lr_loss}
\begin{tabular}{lccccc}
\toprule
\textbf{LR} & \textbf{LEHI} & \textbf{LEHIBRID} & \textbf{ADAM} & \textbf{ADAMW} & \textbf{SOPHIA} \\
\midrule
$3 \times 10^{-6}$ & $8.557 \pm 0.055$ & $8.458 \pm 0.058$ & $5.312 \pm 0.056$ & $5.313 \pm 0.057$ & $4.458 \pm 0.061$ \\
$1 \times 10^{-5}$ & $6.981 \pm 0.047$ & $6.886 \pm 0.045$ & $4.382 \pm 0.039$ & $4.384 \pm 0.039$ & $\mathbf{3.955 \pm 0.020}$ \\
$3 \times 10^{-5}$ & $6.447 \pm 0.043$ & $6.334 \pm 0.049$ & $\mathbf{4.055 \pm 0.028}$ & $\mathbf{4.053 \pm 0.025}$ & $4.494 \pm 0.173$ \\
$1 \times 10^{-4}$ & $5.803 \pm 0.048$ & $5.667 \pm 0.051$ & $4.678 \pm 0.175$ & $4.616 \pm 0.164$ & $4.590 \pm 0.261$ \\
$3 \times 10^{-4}$ & $5.242 \pm 0.041$ & $5.127 \pm 0.040$ & $5.642 \pm 0.187$ & $5.485 \pm 0.189$ & $5.605 \pm 0.038$ \\
$1 \times 10^{-3}$ & $4.742 \pm 0.034$ & $4.635 \pm 0.034$ & $5.803 \pm 0.441$ & $5.629 \pm 0.333$ & $6.070 \pm 0.011$ \\
$3 \times 10^{-3}$ & $4.346 \pm 0.030$ & $4.245 \pm 0.034$ & $6.753 \pm 0.006$ & $4.815 \pm 0.328$ & $6.361 \pm 0.006$ \\
$1 \times 10^{-2}$ & $3.970 \pm 0.010$ & $4.007 \pm 0.045$ & $6.796 \pm 0.014$ & $7.560 \pm 0.114$ & $7.146 \pm 0.021$ \\
$3 \times 10^{-2}$ & $\mathbf{3.842 \pm 0.026}$ & $\mathbf{3.857 \pm 0.048}$ & $7.305 \pm 0.285$ & $7.326 \pm 0.309$ & $49.406 \pm 5.865$ \\
$1 \times 10^{-1}$ & $4.614 \pm 0.250$ & $4.985 \pm 0.338$ & $55.804 \pm 17.641$ & $7.787 \pm 0.035$ & $395.709 \pm 86.724$ \\
\bottomrule
\end{tabular}
\end{table}

\begin{table}[h!]
\centering
\caption{PTB validation loss (log perplexity) risk-adjusted selection scores ($c=2.0$)}
\label{tab:ptb_loss_selection}
\begin{tabular}{lccccc}
\toprule
\textbf{LR} & \textbf{LEHI} & \textbf{LEHIBRID} & \textbf{ADAM} & \textbf{ADAMW} & \textbf{SOPHIA} \\
\midrule
$3 \times 10^{-6}$ & 8.6116 & 8.5154 & 5.3679 & 5.3692 & 4.5188 \\
$1 \times 10^{-5}$ & 7.0278 & 6.9305 & 4.4210 & 4.4228 & \textbf{3.9749} \\
$3 \times 10^{-5}$ & 6.4906 & 6.3832 & \textbf{4.0826} & \textbf{4.0785} & 4.6672 \\
$1 \times 10^{-4}$ & 5.8515 & 5.7176 & 4.8531 & 4.7803 & 4.8511 \\
$3 \times 10^{-4}$ & 5.2830 & 5.1670 & 5.8290 & 5.6739 & 5.6428 \\
$1 \times 10^{-3}$ & 4.7762 & 4.6692 & 6.2445 & 5.9624 & 6.0816 \\
$3 \times 10^{-3}$ & 4.3764 & 4.2783 & 6.7595 & 5.1427 & 6.3673 \\
$1 \times 10^{-2}$ & 3.9798 & 4.0519 & 6.8095 & 7.6742 & 7.1663 \\
$3 \times 10^{-2}$ & \textbf{3.8678} & \textbf{3.9048} & 7.5899 & 7.6349 & 55.2716 \\
$1 \times 10^{-1}$ & 4.8640 & 5.3225 & 73.4445 & 7.8220 & 482.4331 \\
\bottomrule
\end{tabular}
\end{table}

\subsubsection{FineWeb-Edu}
For FineWeb-Edu, we first did a coarse log-scale grid search over the set $\{1 \times 10^{-1},  1 \times 10^{-2}, 1 \times 10^{-3}, 1 \times 10^{-4}, 1 \times 10^{-5}, 1 \times 10^{-6}, 1 \times 10^{-7}\}$. Then based on the results for the optimizers, we refined our search around the best learning rates from the coarse list: $\{3 \times 10^{-3}, 3 \times 10^{-4}\}$  for LEHI and LEHIBRID, $\{3 \times 10^{-2}, 3 \times 10^{-3}, 3 \times 10^{-6}, 3 \times 10^{-7}\}$ for Adam and AdamW, and $\{3 \times 10^{-2}, 3 \times 10^{-3}\}$ for Sophia-G. Table \ref{tab:fineweb-edu_lr_loss} reports the mean testing loss and the associated stability metric ($2 \times$ standard deviation) calculated over the final $3$ epochs of training. The final risk-adjusted selection scores are reported in Table \ref{tab:fineweb-edu_loss_selection}. We selected learning rates $\{3 \times 10^{-3}, 1 \times 10^{-3}\}$ for LEHI and LEHIBRID, $\{3 \times 10^{-3}, 3 \times 10^{-6}\}$ for Adam, $\{3 \times 10^{-2}, 3 \times 10^{-6}\}$ for AdamW, and $\{1 \times 10^{-2}, 3 \times 10^{-3}, 1 \times 10^{-3}\}$ for Sophia-G to conduct further evaluations across two additional random seeds ($1$ and $2$).

\begin{table}[h!]
\centering
\footnotesize
\setlength{\tabcolsep}{3.0pt}
\renewcommand{\arraystretch}{1}
\caption{FineWeb-Edu testing loss $\pm \; 2 \times \text{standard deviation}$ across last $300$ steps}
\label{tab:fineweb-edu_lr_loss}
\begin{tabular}{lccccc}
\toprule
\textbf{LR} & \textbf{LEHI} & \textbf{LEHIBRID} & \textbf{ADAM} & \textbf{ADAMW} & \textbf{SOPHIA} \\
\midrule
$1 \times 10^{-7}$ & $11.310 \pm 0.033$ & $11.310 \pm 0.033$ & $9.291 \pm 0.037$ & $9.291 \pm 0.037$ & $8.721 \pm 0.065$ \\
$3 \times 10^{-7}$ & -- & -- & $8.498 \pm 0.076$ & $8.498 \pm 0.076$ & -- \\
$1 \times 10^{-6}$ & $10.371 \pm 0.026$ & $10.371 \pm 0.026$ & $7.365 \pm 0.055$ & $7.366 \pm 0.055$ & $7.593 \pm 0.119$ \\
$3 \times 10^{-6}$ & -- & -- & $\mathbf{6.819 \pm 0.005}$ & $6.818 \pm 0.005$ & -- \\
$1 \times 10^{-5}$ & $9.508 \pm 0.030$ & $9.508 \pm 0.030$ & $7.885 \pm 0.140$ & $7.877 \pm 0.145$ & $7.959 \pm 0.040$ \\
$1 \times 10^{-4}$ & $7.843 \pm 0.054$ & $7.843 \pm 0.054$ & $7.803 \pm 0.037$ & $7.751 \pm 0.025$ & $7.305 \pm 0.343$ \\
$3 \times 10^{-4}$ & $7.192 \pm 0.035$ & $7.192 \pm 0.035$ & -- & -- & -- \\
$1 \times 10^{-3}$ & $\mathbf{6.652 \pm 0.021}$ & $\mathbf{6.652 \pm 0.022}$ & $7.991 \pm 0.306$ & $8.713 \pm 0.145$ & $7.053 \pm 0.056$ \\
$3 \times 10^{-3}$ & $6.909 \pm 0.145$ & $6.909 \pm 0.148$ & $6.942 \pm 0.084$ & $6.928 \pm 0.045$ & $7.042 \pm 0.033$ \\
$1 \times 10^{-2}$ & $8.238 \pm 0.067$ & $8.237 \pm 0.066$ & $6.910 \pm 0.219$ & $7.132 \pm 0.104$ & $\mathbf{6.996 \pm 0.107}$ \\
$3 \times 10^{-2}$ & -- & -- & $7.287 \pm 0.055$ & $\mathbf{6.515 \pm 0.087}$ & $54.587 \pm 25.593$ \\
$1 \times 10^{-1}$ & $8.418 \pm 0.065$ & $8.463 \pm 0.055$ & $10.990 \pm 4.329$ & $8.304 \pm 1.579$ & $412.060 \pm 76.470$ \\
\bottomrule
\end{tabular}
\end{table}

\begin{table}[h!]
\centering
\caption{FineWeb-Edu testing loss risk-adjusted selection scores ($k=2.0$)}
\label{tab:fineweb-edu_loss_selection}
\begin{tabular}{lccccc}
\toprule
\textbf{LR} & \textbf{LEHI} & \textbf{LEHIBRID} & \textbf{ADAM} & \textbf{ADAMW} & \textbf{SOPHIA} \\
\midrule
$1 \times 10^{-7}$ & 11.343 & 11.344 & 9.328 & 9.328 & 8.786 \\
$3 \times 10^{-7}$ & -- & -- & 8.574 & 8.574 & -- \\
$1 \times 10^{-6}$ & 10.396 & 10.396 & 7.421 & 7.421 & 7.712 \\
$3 \times 10^{-6}$ & -- & -- & \textbf{6.823} & 6.824 & -- \\
$1 \times 10^{-5}$ & 9.538 & 9.538 & 8.025 & 8.022 & 7.999 \\
$1 \times 10^{-4}$ & 7.897 & 7.897 & 7.839 & 7.776 & 7.648 \\
$3 \times 10^{-4}$ & 7.227 & 7.227 & -- & -- & -- \\
$1 \times 10^{-3}$ & \textbf{6.674} & \textbf{6.674} & 8.297 & 8.858 & 7.109 \\
$3 \times 10^{-3}$ & 7.054 & 7.057 & 7.027 & 6.973 & \textbf{7.075} \\
$1 \times 10^{-2}$ & 8.304 & 8.303 & 7.129 & 7.235 & 7.103 \\
$3 \times 10^{-2}$ & -- & -- & 7.342 & \textbf{6.602} & 80.180 \\
$1 \times 10^{-1}$ & 8.483 & 8.518 & 15.319 & 9.883 & 488.530 \\
\bottomrule
\end{tabular}
\end{table}

\subsection{Additional Results} \label{app.sub.add}
In this subsection, we provide the statistics for our experiments with $3$ random seeds. We averaged each experiment setting across $3$ random seeds, and calculated their average as well as standard deviation across the final several epochs.

\subsubsection{UCI Protein}
The summary of experiments is presented in Table \ref{tab:protein_summary}. We chose and plotted learning rate $3 \times 10^{-3}$ for LEHI and LEHIBRID, and $3 \times 10^{-4}$ for Adam and AdamW. Figure \ref{fig:protein_loss_raw} shows the raw plots without smoothing.

\begin{table}[h!]
\centering
\small
\caption{UCI Protein 3-seed summary (last $10$ epochs, $k=2.0$)}
\label{tab:protein_summary}
\begin{tabular}{lcccc}
\toprule
\textbf{Optimizer} & \textbf{LR} & \textbf{Mean Loss} & \textbf{2 $\times$ Std Dev} & \textbf{Combined Score} \\
\midrule
\textbf{LEHI} & $3.0 \times 10^{-3}$ & 0.2833 & 0.0006 & 0.2839 \\
              & $1.0 \times 10^{-2}$ & 0.2634 & 0.0021 & 0.2655 \\
              & $3.0 \times 10^{-2}$ & 0.2525 & 0.0051 & 0.2576 \\
              & $1.0 \times 10^{-1}$ & 0.2446 & 0.0061 & \textbf{0.2507} \\
\midrule
\textbf{LEHIBRID} & $3.0 \times 10^{-3}$ & 0.2764 & 0.0008 & 0.2772 \\
                  & $1.0 \times 10^{-2}$ & 0.2613 & 0.0038 & 0.2650 \\
                  & $3.0 \times 10^{-2}$ & 0.2472 & 0.0038 & 0.2510 \\
                  & $1.0 \times 10^{-1}$ & 0.2435 & 0.0060 & \textbf{0.2496} \\
\midrule
\textbf{ADAM} & $1.0 \times 10^{-4}$ & 0.2739 & 0.0007 & 0.2747 \\
              & $3.0 \times 10^{-4}$ & 0.2576 & 0.0013 & 0.2589 \\
              & $1.0 \times 10^{-3}$ & 0.2468 & 0.0053 & \textbf{0.2521} \\
              & $3.0 \times 10^{-3}$ & 0.2442 & 0.0090 & 0.2532 \\
              & $1.0 \times 10^{-2}$ & 0.2596 & 0.0076 & 0.2672 \\
\midrule
\textbf{ADAMW} & $1.0 \times 10^{-4}$ & 0.2750 & 0.0007 & 0.2757 \\
               & $3.0 \times 10^{-4}$ & 0.2596 & 0.0013 & 0.2609 \\
               & $1.0 \times 10^{-3}$ & 0.2516 & 0.0051 & \textbf{0.2567} \\
               & $3.0 \times 10^{-3}$ & 0.2530 & 0.0103 & 0.2633 \\
               & $1.0 \times 10^{-2}$ & 0.2736 & 0.0061 & 0.2798 \\
\bottomrule
\end{tabular}
\end{table}

\begin{figure}[h!]
  \centering
  \begin{tabular}{cc}
    \includegraphics[width=0.45\textwidth]{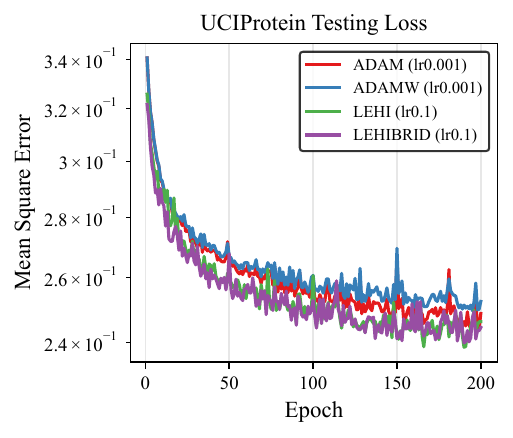} & 
    \includegraphics[width=0.45\textwidth]{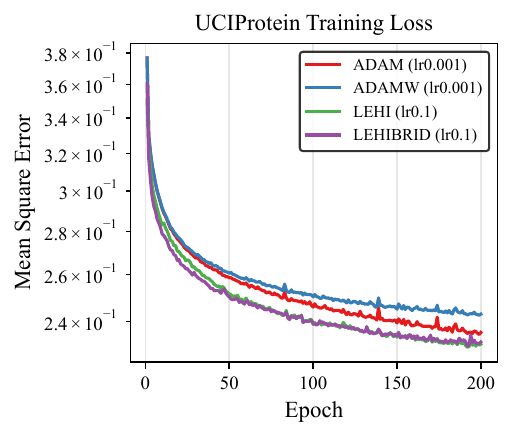} \\
    (a) Raw testing loss curves on UCI Protein  & (b) Raw training loss curves on UCI Protein
  \end{tabular}
  \caption{Raw testing and training loss curves on UCI Protein. The first epoch is omitted.}
  \label{fig:protein_loss_raw}
\end{figure}

\subsubsection{MNIST}
The summary of experiments is presented in Table \ref{tab:mnist_summary}. We chose and plotted learning rate $1 \times 10^{-1}$ for LEHI and LEHIBRID, and $1 \times 10^{-3}$ for Adam and AdamW. Figure \ref{fig:mnist_acc_raw} shows the raw plots without smoothing.

\begin{table}[h!]
\centering
\small
\caption{MNIST 3-seed summary (last $3$ epochs, $k=2.0$)}
\label{tab:mnist_summary}
\begin{tabular}{lcccc}
\toprule
\textbf{Optimizer} & \textbf{LR} & \textbf{Mean Acc (\%)} & \textbf{2 $\times$ Std Dev} & \textbf{Combined Score} \\
\midrule
\textbf{LEHI} & $3.0 \times 10^{-4}$ & 93.7500 & 0.2379 & 93.5121 \\
              & $1.0 \times 10^{-3}$ & 96.4256 & 0.1527 & 96.2729 \\
              & $3.0 \times 10^{-3}$ & 97.3389 & 0.0737 & \textbf{97.2652} \\
              & $1.0 \times 10^{-2}$ & 96.3144 & 0.3074 & 96.0070 \\
\midrule
\textbf{LEHIBRID} & $3.0 \times 10^{-4}$ & 94.5478 & 0.1833 & 94.3644 \\
                  & $1.0 \times 10^{-3}$ & 96.8744 & 0.1059 & 96.7685 \\
                  & $3.0 \times 10^{-3}$ & 97.2844 & 0.1639 & \textbf{97.1206} \\
                  & $1.0 \times 10^{-2}$ & 96.2889 & 0.0908 & 96.1981 \\
\midrule
\textbf{ADAM} & $1.0 \times 10^{-4}$ & 96.2356 & 0.1426 & 96.0930 \\
              & $3.0 \times 10^{-4}$ & 97.3089 & 0.0684 & \textbf{97.2405} \\
              & $1.0 \times 10^{-3}$ & 97.2611 & 0.1336 & 97.1276 \\
              & $3.0 \times 10^{-3}$ & 97.0511 & 0.2579 & 96.7932 \\
\midrule
\textbf{ADAMW} & $1.0 \times 10^{-4}$ & 96.2422 & 0.1424 & 96.0998 \\
               & $3.0 \times 10^{-4}$ & 97.2967 & 0.0467 & \textbf{97.2500} \\
               & $1.0 \times 10^{-3}$ & 97.3533 & 0.1067 & 97.2467 \\
               & $3.0 \times 10^{-3}$ & 97.0300 & 0.1737 & 96.8563 \\
\bottomrule
\end{tabular}
\end{table}

\begin{figure}[h!]
  \centering
  \begin{tabular}{cc}
    \includegraphics[width=0.45\textwidth]{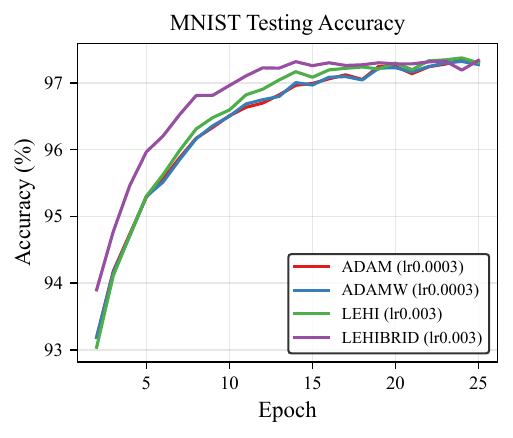} & 
    \includegraphics[width=0.45\textwidth]{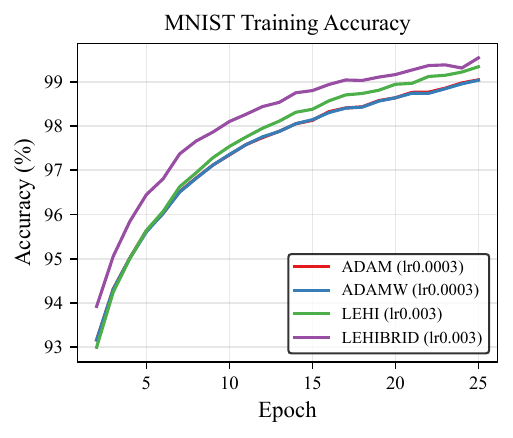} \\
    (a) Raw testing accuracy curves on MNIST  & (b) Raw training accuracy curves on MNIST
  \end{tabular}
  \caption{Raw testing and training accuracy curves on MNIST. The first $2$ epochs are omitted.}
  \label{fig:mnist_acc_raw}
\end{figure}

\subsubsection{CIFAR-10}
The summary of experiments is presented in Table \ref{tab:cifar10_summary}. We chose and plotted learning rates $3 \times 10^{-2}$ for LEHI and LEHIBRID, $1 \times 10^{-3}$ for Adam, and $3 \times 10^{-3}$ for AdamW. Figure \ref{fig:cifar10_acc_raw} shows the raw plots without smoothing.

\begin{table}[h!]
\centering
\small
\caption{CIFAR-10 3-seed summary (last $7$ epochs, $k=2.0$)}
\label{tab:cifar10_summary}
\begin{tabular}{lcccc}
\toprule
\textbf{Optimizer} & \textbf{LR} & \textbf{Mean Acc (\%)} & \textbf{2 $\times$ Std Dev} & \textbf{Combined Score} \\
\midrule
\textbf{LEHI} & $1.0 \times 10^{-2}$ & 91.6981 & 0.2278 & 91.4703 \\
              & $3.0 \times 10^{-2}$ & 91.9462 & 0.4449 & \textbf{91.5013} \\
\midrule
\textbf{LEHIBRID} & $1.0 \times 10^{-2}$ & 91.7581 & 0.5590 & 91.1991 \\
                  & $3.0 \times 10^{-2}$ & 91.8519 & 0.3323 & \textbf{91.5196} \\
\midrule
\textbf{ADAM} & $1.0 \times 10^{-3}$ & 91.6086 & 0.5680 & \textbf{91.0406} \\
              & $3.0 \times 10^{-3}$ & 91.6567 & 0.9279 & 90.7287 \\
\midrule
\textbf{ADAMW} & $1.0 \times 10^{-3}$ & 91.6033 & 0.5826 & 91.0207 \\
               & $3.0 \times 10^{-3}$ & 91.4690 & 0.1874 & \textbf{91.2816} \\
\bottomrule
\end{tabular}
\end{table}

\begin{figure}[h!]
  \centering
  \begin{tabular}{cc}
    \includegraphics[width=0.45\textwidth]{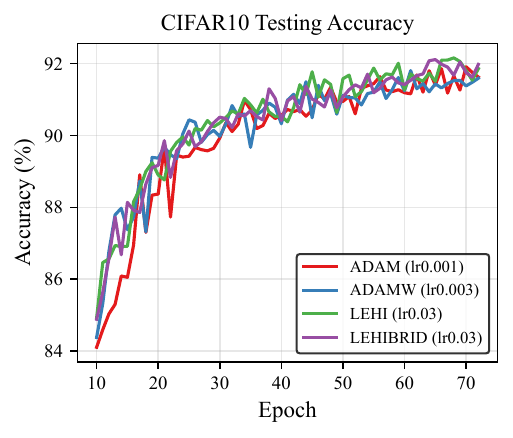} & 
    \includegraphics[width=0.45\textwidth]{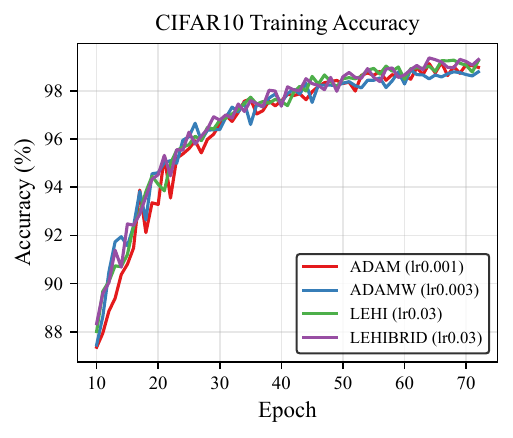} \\
    (a) Raw testing accuracy curves on CIFAR-10  & (b) Raw training accuracy curves on CIFAR-10
  \end{tabular}
  \caption{Raw testing and training accuracy curves on CIFAR-10. The first $10$ epochs are omitted.}
  \label{fig:cifar10_acc_raw}
\end{figure}

\subsubsection{CIFAR-100}
The summary of experiments is presented in Table \ref{tab:cifar100_summary}. We chose and plotted learning rates $3 \times 10^{-2}$ for LEHI, $1 \times 10^{-2}$ for LEHIBRID, $1 \times 10^{-3}$ for Adam and AdamW. Figure \ref{fig:cifar100_acc_raw} shows the raw plots without smoothing.

\begin{table}[h!]
\centering
\small
\caption{CIFAR-100 3-seed summary (last $10$ epochs, $k=2.0$)}
\label{tab:cifar100_summary}
\begin{tabular}{lcccc}
\toprule
\textbf{Optimizer} & \textbf{LR} & \textbf{Mean Acc (\%)} & \textbf{2 $\times$ Std Dev} & \textbf{Combined Score} \\
\midrule
\textbf{LEHI} & $1.0 \times 10^{-2}$ & 68.5713 & 0.5628 & 68.0085 \\
              & $3.0 \times 10^{-2}$ & 69.3520 & 0.8172 & \textbf{68.5348} \\
\midrule
\textbf{LEHIBRID} & $1.0 \times 10^{-2}$ & 69.4267 & 0.3150 & \textbf{69.1117} \\
                  & $3.0 \times 10^{-2}$ & 69.4640 & 0.5574 & 68.9066 \\
\midrule
\textbf{ADAM} & $3.0 \times 10^{-4}$ & 67.3887 & 0.5632 & 66.8254 \\
              & $1.0 \times 10^{-3}$ & 69.0363 & 0.5484 & \textbf{68.4879} \\
              & $3.0 \times 10^{-3}$ & 67.6883 & 0.6722 & 67.0162 \\
\midrule
\textbf{ADAMW} & $3.0 \times 10^{-4}$ & 67.5657 & 0.7810 & 66.7846 \\
               & $1.0 \times 10^{-3}$ & 69.0487 & 0.7393 & \textbf{68.3094} \\
               & $3.0 \times 10^{-3}$ & 67.1093 & 0.6594 & 66.4500 \\
\bottomrule
\end{tabular}
\end{table}

\begin{figure}[h!]
  \centering
  \begin{tabular}{cc}
    \includegraphics[width=0.45\textwidth]{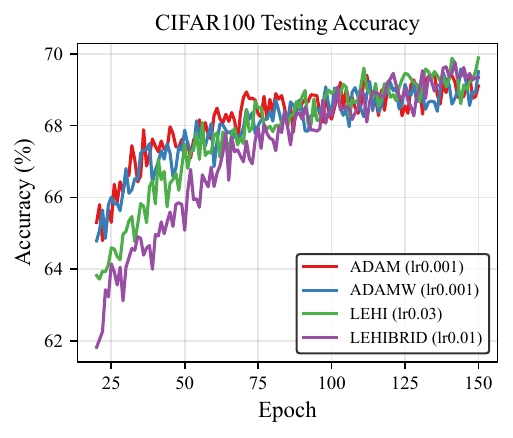} & 
    \includegraphics[width=0.45\textwidth]{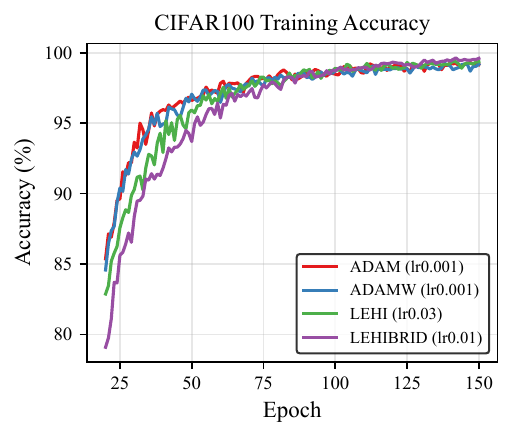} \\
    (a) Raw testing accuracy curves on CIFAR-100  & (b) Raw training accuracy curves on CIFAR-100
  \end{tabular}
  \caption{Raw testing and training accuracy curves on CIFAR-100. The first $20$ epochs are omitted.}
  \label{fig:cifar100_acc_raw}
\end{figure}

\subsubsection{Penn Treebank}
The summary of validation experiments is presented in Table \ref{tab:ptb_summary}. Based on these results, we evaluated the optimizers on the held-out testing set using learning rates $3 \times 10^{-2}$ for LEHI, $1 \times 10^{-2}$ for LEHIBRID, and $3 \times 10^{-5}$ for Adam and AdamW with random seeds $\{0,1,2\}$. Table \ref{tab:ptb_test_summary} shows the summary of testing performance, and Figure \ref{fig:ptb_loss_raw} shows the raw plots without smoothing. Additionally, we provide the comparison between validation loss and testing loss using the same learning rates in Figure \ref{fig:ptb_valid_raw}; they closely match each other and this maintained similarity demonstrates that our hyperparameter choices did not lead to overfitting to the validation set.

\begin{table}[h!]
\centering
\small
\caption{Penn Treebank 3-seed validation summary (last $3$ epochs, $k=2.0$)}
\label{tab:ptb_summary}
\begin{tabular}{lcccc}
\toprule
\textbf{Optimizer} & \textbf{LR} & \textbf{Mean Loss} & \textbf{2 $\times$ Std Dev} & \textbf{Combined Score} \\
\midrule
\textbf{LEHI} & $3.0 \times 10^{-6}$ & 8.5298 & 0.0567 & 8.5865 \\
              & $1.0 \times 10^{-5}$ & 6.9801 & 0.0450 & 7.0251 \\
              & $3.0 \times 10^{-5}$ & 6.4496 & 0.0447 & 6.4942 \\
              & $1.0 \times 10^{-4}$ & 5.7972 & 0.0488 & 5.8460 \\
              & $3.0 \times 10^{-4}$ & 5.2401 & 0.0402 & 5.2803 \\
              & $1.0 \times 10^{-3}$ & 4.7461 & 0.0348 & 4.7809 \\
              & $3.0 \times 10^{-3}$ & 4.3444 & 0.0318 & 4.3762 \\
              & $1.0 \times 10^{-2}$ & 3.9623 & 0.0068 & 3.9690 \\
              & $3.0 \times 10^{-2}$ & 3.8619 & 0.0299 & \textbf{3.8918} \\
              & $1.0 \times 10^{-1}$ & 4.6245 & 0.2623 & 4.8868 \\
\midrule
\textbf{LEHIBRID} & $3.0 \times 10^{-6}$ & 8.4302 & 0.0616 & 8.4918 \\
                  & $1.0 \times 10^{-5}$ & 6.8887 & 0.0440 & 6.9328 \\
                  & $3.0 \times 10^{-5}$ & 6.3332 & 0.0500 & 6.3832 \\
                  & $1.0 \times 10^{-4}$ & 5.6612 & 0.0496 & 5.7107 \\
                  & $3.0 \times 10^{-4}$ & 5.1271 & 0.0393 & 5.1663 \\
                  & $1.0 \times 10^{-3}$ & 4.6384 & 0.0350 & 4.6734 \\
                  & $3.0 \times 10^{-3}$ & 4.2379 & 0.0362 & 4.2740 \\
                  & $1.0 \times 10^{-2}$ & 3.9643 & 0.0247 & \textbf{3.9890} \\
                  & $3.0 \times 10^{-2}$ & 3.9093 & 0.0817 & 3.9910 \\
                  & $1.0 \times 10^{-1}$ & 4.9625 & 0.3163 & 5.2788 \\
\midrule
\textbf{ADAM} & $3.0 \times 10^{-6}$ & 5.3104 & 0.0559 & 5.3663 \\
              & $1.0 \times 10^{-5}$ & 4.3844 & 0.0376 & 4.4220 \\
              & $3.0 \times 10^{-5}$ & 4.0529 & 0.0225 & \textbf{4.0754} \\
              & $1.0 \times 10^{-4}$ & 4.6832 & 0.1862 & 4.8695 \\
              & $3.0 \times 10^{-4}$ & 5.6280 & 0.2005 & 5.8285 \\
              & $1.0 \times 10^{-3}$ & 5.9294 & 0.4243 & 6.3538 \\
              & $3.0 \times 10^{-3}$ & 6.8739 & 0.0042 & 6.8781 \\
              & $1.0 \times 10^{-2}$ & 6.7959 & 0.0136 & 6.8095 \\
              & $3.0 \times 10^{-2}$ & 7.5288 & 1.0875 & 8.6164 \\
              & $1.0 \times 10^{-1}$ & 55.8035 & 17.6410 & 73.4445 \\
\midrule
\textbf{ADAMW} & $3.0 \times 10^{-6}$ & 5.3112 & 0.0564 & 5.3677 \\
               & $1.0 \times 10^{-5}$ & 4.3863 & 0.0376 & 4.4239 \\
               & $3.0 \times 10^{-5}$ & 4.0513 & 0.0205 & \textbf{4.0718} \\
               & $1.0 \times 10^{-4}$ & 4.6226 & 0.1765 & 4.7991 \\
               & $3.0 \times 10^{-4}$ & 5.4602 & 0.1950 & 5.6552 \\
               & $1.0 \times 10^{-3}$ & 5.6664 & 0.3169 & 5.9834 \\
               & $3.0 \times 10^{-3}$ & 4.7419 & 0.3175 & 5.0594 \\
               & $1.0 \times 10^{-2}$ & 7.4087 & 0.0830 & 7.4916 \\
               & $3.0 \times 10^{-2}$ & 9.1674 & 0.0033 & 9.1708 \\
               & $1.0 \times 10^{-1}$ & 8.4660 & 0.0691 & 8.5351 \\
\midrule
\textbf{SOPHIA} & $3.0 \times 10^{-6}$ & 4.4604 & 0.0646 & 4.5250 \\
                & $1.0 \times 10^{-5}$ & 3.9546 & 0.0169 & \textbf{3.9715} \\
                & $3.0 \times 10^{-5}$ & 4.5265 & 0.1724 & 4.6990 \\
                & $1.0 \times 10^{-4}$ & 4.5664 & 0.2477 & 4.8141 \\
                & $3.0 \times 10^{-4}$ & 5.5557 & 0.0112 & 5.5668 \\
                & $1.0 \times 10^{-3}$ & 5.9687 & 0.0217 & 5.9904 \\
                & $3.0 \times 10^{-3}$ & 6.2818 & 0.1180 & 6.3998 \\
                & $1.0 \times 10^{-2}$ & 7.0339 & 0.0104 & 7.0443 \\
                & $3.0 \times 10^{-2}$ & 68.9475 & 13.0007 & 81.9483 \\
                & $1.0 \times 10^{-1}$ & 399.8266 & 3.8071 & 403.6337 \\
\bottomrule
\end{tabular}
\end{table}

\begin{table}[ht]
\centering
\small
\caption{Penn Treebank 3-seed testing summary (last $3$ epochs, $k=2.0$)}
\label{tab:ptb_test_summary}
\begin{tabular}{lcccc}
\toprule
\textbf{Optimizer} & \textbf{LR} & \textbf{Mean Loss} & \textbf{2 $\times$ Std Dev} & \textbf{Combined Score} \\
\midrule
\textbf{LEHI}     & $3.0 \times 10^{-2}$ & 3.7325 & 0.0301 & \textbf{3.7626} \\
\midrule
\textbf{SOPHIA} & $1.0 \times 10^{-5}$ & 3.8237 & 0.0173 & \textbf{3.8410} \\
\midrule
\textbf{LEHIBRID} & $1.0 \times 10^{-2}$ & 3.8283 & 0.0245 & \textbf{3.8528} \\
\midrule
\textbf{ADAMW}    & $3.0 \times 10^{-5}$ & 3.9150 & 0.0215 & \textbf{3.9365} \\
\midrule
\textbf{ADAM}     & $3.0 \times 10^{-5}$ & 3.9168 & 0.0237 & \textbf{3.9405} \\
\bottomrule
\end{tabular}
\end{table}

\begin{figure}[h!]
  \centering
  \begin{tabular}{cc}
    \includegraphics[width=0.45\textwidth]{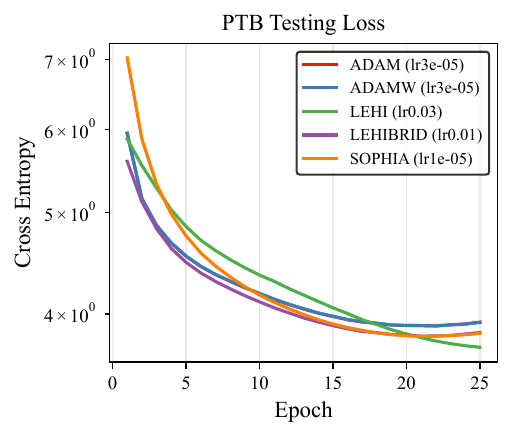} & 
    \includegraphics[width=0.45\textwidth]{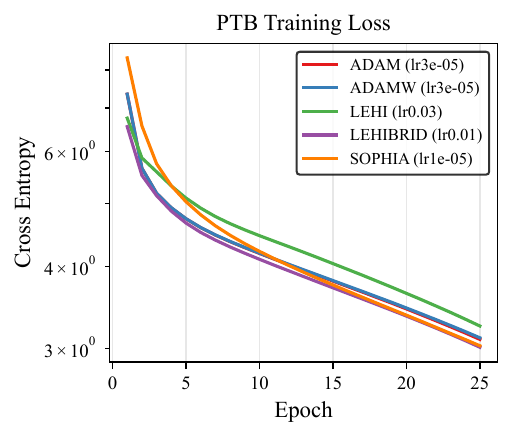} \\
    (a) Raw testing loss curves on Penn Treebank  & (b) Raw training loss curves on Penn Treebank
  \end{tabular}
  \caption{Raw testing and training loss curves on Penn Treebank.}
  \label{fig:ptb_loss_raw}
\end{figure}

\begin{figure}[h!]
  \centering
  \begin{tabular}{cc}
    \includegraphics[width=0.45\textwidth]{figures/3_seed_results/PTB_test_loss_raw.pdf} & 
    \includegraphics[width=0.45\textwidth]{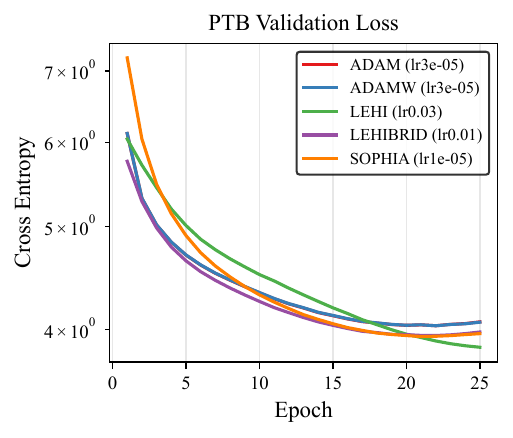} \\
    (a) Raw testing loss curves on Penn Treebank  & (b) Raw validation loss curves on Penn Treebank
  \end{tabular}
  \caption{Raw testing and validation loss curves on Penn Treebank.}
  \label{fig:ptb_valid_raw}
\end{figure}

\subsubsection{FineWeb-Edu}
The summary of experiments is presented in Table \ref{tab:fineweb-edu_summary}. We chose and plotted learning rates $1 \times 10^{-3}$ for LEHI and LEHIBRID, $3 \times 10^{-6}$ for Adam and AdamW, and $3 \times 10^{-3}$ for Sophia-G. Figure \ref{fig:fineweb-edu_loss_raw} shows the testing loss without smoothing.  Training loss not included since the evaluation was based on steps.

\begin{table}[h!]
\centering
\small
\caption{FineWeb-Edu 3-seed summary (last 300 steps, $k=2.0$)}
\label{tab:fineweb-edu_summary}
\begin{tabular}{lcccc}
\toprule
\textbf{Optimizer} & \textbf{LR} & \textbf{Mean Loss} & \textbf{2 $\times$ Std Dev} & \textbf{Combined Score} \\
\midrule
\textbf{LEHI} & $1.0 \times 10^{-3}$ & 6.6491 & 0.0189 & \textbf{6.6680} \\
              & $3.0 \times 10^{-3}$ & 6.9125 & 0.1465 & 7.0590 \\
\midrule
\textbf{LEHIBRID} & $1.0 \times 10^{-3}$ & 6.6491 & 0.0190 & \textbf{6.6681} \\
                  & $3.0 \times 10^{-3}$ & 6.9138 & 0.1493 & 7.0631 \\
\midrule
\textbf{ADAM} & $3.0 \times 10^{-6}$ & 6.8197 & 0.0014 & \textbf{6.8212} \\
              & $3.0 \times 10^{-3}$ & 7.0084 & 0.0538 & 7.0621 \\
\midrule
\textbf{ADAMW} & $3.0 \times 10^{-6}$ & 6.8192 & 0.0012 & \textbf{6.8205} \\
               & $3.0 \times 10^{-3}$ & 6.9325 & 0.0416 & 6.9741 \\
               & $3.0 \times 10^{-2}$ & 7.1038 & 0.1195 & 7.2234 \\
\midrule
\textbf{SOPHIA} & $1.0 \times 10^{-3}$ & 7.0450 & 0.0390 & 7.0840 \\
                & $3.0 \times 10^{-3}$ & 7.0345 & 0.0283 & \textbf{7.0629} \\
                & $1.0 \times 10^{-2}$ & 7.0752 & 0.0630 & 7.1381 \\
\bottomrule
\end{tabular}
\end{table}

\begin{figure}[h!]
    \centering
    \includegraphics[width=0.4\columnwidth]{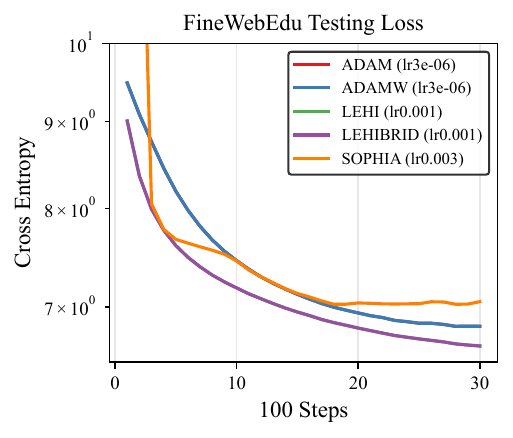}
    \caption{Raw testing loss curves on FineWeb-Edu}
    \label{fig:fineweb-edu_loss_raw}
\end{figure}

\subsection{Sensitivity Analysis on Penn Treebank}
Table \ref{tab:ptb_stability_summary} shows more extensive results of the sensitivity analysis on learning rates. The column Max Grad Seen records the largest infinity norm of loss function seen across $3$ runs. The column Avg. Spikes records the average number of infinity norm of loss function larger than $10$ within $25$ epochs across $3$ runs. And the column NaN Failures records the number of runs ending at NaN out of $3$. Finally, the column Status summarizes the results: if an optimizer has NaN issues at certain learning rate, it will be marked as FAILED; if no NaN issues, but there are spikes, it will be marked as NOISY; otherwise, the optimizer-learning rate pair is marked as STABLE. One can observe that both LEHI and LEHIBRID are stable across all learning rates searched with all $3$ random seeds, while Adam and AdamW are much more sensitive to learning rate choices.

\begin{table}[h!]
\centering
\small
\caption{Penn Treebank training stability and gradient robustness summary (3 seeds)}
\label{tab:ptb_stability_summary}
\begin{tabular}{lccccc}
\toprule
\textbf{Optimizer} & \textbf{LR} & \textbf{Max Grad Seen} & \textbf{Avg. Spikes} & \textbf{NaN Failures} & \textbf{Status} \\
\midrule
\textbf{ADAM} & $3 \times 10^{-6}$ & $1.23 \times 10^{-1}$ & 0.00 & 0 & STABLE \\
              & $1 \times 10^{-5}$ & $9.33 \times 10^{-2}$ & 0.00 & 0 & STABLE \\
              & $3 \times 10^{-5}$ & $7.74 \times 10^{-2}$ & 0.00 & 0 & STABLE \\
              & $1 \times 10^{-4}$ & $6.75 \times 10^{-2}$ & 0.00 & 0 & STABLE \\
              & $3 \times 10^{-4}$ & $5.21 \times 10^{-2}$ & 0.00 & 0 & STABLE \\
              & $1 \times 10^{-3}$ & $5.21 \times 10^{-2}$ & 0.00 & 0 & STABLE \\
              & $3 \times 10^{-3}$ & $4.40 \times 10^{32}$ & 19.33 & 0 & NOISY \\
              & $1 \times 10^{-2}$ & $1.02 \times 10^{32}$ & 10.00 & 2 & FAILED \\
              & $3 \times 10^{-2}$ & $4.26 \times 10^{32}$ & 19.67 & 1 & FAILED \\
              & $1 \times 10^{-1}$ & $1.91 \times 10^{29}$ & 11.67 & 2 & FAILED \\
\midrule
\textbf{ADAMW} & $3 \times 10^{-6}$ & $1.40 \times 10^{-1}$ & 0.00 & 0 & STABLE \\
               & $1 \times 10^{-5}$ & $9.33 \times 10^{-2}$ & 0.00 & 0 & STABLE \\
               & $3 \times 10^{-5}$ & $7.79 \times 10^{-2}$ & 0.00 & 0 & STABLE \\
               & $1 \times 10^{-4}$ & $7.09 \times 10^{-2}$ & 0.00 & 0 & STABLE \\
               & $3 \times 10^{-4}$ & $5.24 \times 10^{-2}$ & 0.00 & 0 & STABLE \\
               & $1 \times 10^{-3}$ & $4.82 \times 10^{-2}$ & 0.00 & 0 & STABLE \\
               & $3 \times 10^{-3}$ & $6.74 \times 10^{-2}$ & 0.00 & 0 & STABLE \\
               & $1 \times 10^{-2}$ & $8.00 \times 10^{27}$ & 15.33 & 1 & FAILED \\
               & $3 \times 10^{-2}$ & $1.96 \times 10^{30}$ & 5.33 & 1 & FAILED \\
               & $1 \times 10^{-1}$ & $2.35 \times 10^{17}$ & 1.33 & 0 & NOISY \\
\midrule
\textbf{LEHI} & $3 \times 10^{-6}$ & $1.45 \times 10^{-1}$ & 0.00 & 0 & STABLE \\
              & $1 \times 10^{-5}$ & $1.44 \times 10^{-1}$ & 0.00 & 0 & STABLE \\
              & $3 \times 10^{-5}$ & $1.33 \times 10^{-1}$ & 0.00 & 0 & STABLE \\
              & $1 \times 10^{-4}$ & $1.29 \times 10^{-1}$ & 0.00 & 0 & STABLE \\
              & $3 \times 10^{-4}$ & $1.19 \times 10^{-1}$ & 0.00 & 0 & STABLE \\
              & $1 \times 10^{-3}$ & $9.96 \times 10^{-2}$ & 0.00 & 0 & STABLE \\
              & $3 \times 10^{-3}$ & $8.23 \times 10^{-2}$ & 0.00 & 0 & STABLE \\
              & $1 \times 10^{-2}$ & $6.40 \times 10^{-2}$ & 0.00 & 0 & STABLE \\
              & $3 \times 10^{-2}$ & $7.34 \times 10^{-2}$ & 0.00 & 0 & STABLE \\
              & $1 \times 10^{-1}$ & $\mathbf{3.23 \times 10^{-2}}$ & 0.00 & 0 & STABLE \\
\midrule
\textbf{LEHIBRID} & $3 \times 10^{-6}$ & $1.39 \times 10^{-1}$ & 0.00 & 0 & STABLE \\
                  & $1 \times 10^{-5}$ & $1.43 \times 10^{-1}$ & 0.00 & 0 & STABLE \\
                  & $3 \times 10^{-5}$ & $1.32 \times 10^{-1}$ & 0.00 & 0 & STABLE \\
                  & $1 \times 10^{-4}$ & $1.26 \times 10^{-1}$ & 0.00 & 0 & STABLE \\
                  & $3 \times 10^{-4}$ & $1.14 \times 10^{-1}$ & 0.00 & 0 & STABLE \\
                  & $1 \times 10^{-3}$ & $9.48 \times 10^{-2}$ & 0.00 & 0 & STABLE \\
                  & $3 \times 10^{-3}$ & $7.62 \times 10^{-2}$ & 0.00 & 0 & STABLE \\
                  & $1 \times 10^{-2}$ & $6.24 \times 10^{-2}$ & 0.00 & 0 & STABLE \\
                  & $3 \times 10^{-2}$ & $6.83 \times 10^{-2}$ & 0.00 & 0 & STABLE \\
                  & $1 \times 10^{-1}$ & $\mathbf{4.73 \times 10^{-2}}$ & 0.00 & 0 & STABLE \\
\midrule
\textbf{SOPHIA} & $3 \times 10^{-6}$ & $1.23 \times 10^{-1}$ & 0.00 & 0 & STABLE \\
                & $1 \times 10^{-5}$ & $1.02 \times 10^{-1}$ & 0.00 & 0 & STABLE \\
                & $3 \times 10^{-5}$ & $6.13 \times 10^{-2}$ & 0.00 & 0 & STABLE \\
                & $1 \times 10^{-4}$ & $4.29 \times 10^{-2}$ & 0.00 & 0 & STABLE \\
                & $3 \times 10^{-4}$ & $9.35 \times 10^{0}$ & 0.00 & 0 & STABLE \\
                & $1 \times 10^{-3}$ & $1.14 \times 10^{1}$ & 0.33 & 0 & NOISY \\
                & $3 \times 10^{-3}$ & $6.10 \times 10^{1}$ & 1.33 & 0 & NOISY \\
                & $1 \times 10^{-2}$ & $3.45 \times 10^{6}$ & 5.67 & 0 & NOISY \\
                & $3 \times 10^{-2}$ & $1.20 \times 10^{10}$ & 11.33 & 0 & NOISY \\
                & $1 \times 10^{-1}$ & $1.77 \times 10^{22}$ & 11.33 & 0 & NOISY \\
\bottomrule
\end{tabular}
\end{table}



\end{document}